\newtheorem{thm}{Theorem}
\newtheorem{defn}{Definition}
\newtheorem{pro}{Proposition}
\newtheorem{rk}{Remark}
\newtheorem{ex}{Example}
\numberwithin{equation}{section} \setcounter{tocdepth}{1}
\newcommand{\M}{{\mathcal M}}
\newcommand{\bea}{\begin{eqnarray}}
\newcommand{\eea}{\end{eqnarray}}
\newcommand{\R}{\mathbb{R}}
\def\M{\mathcal M}
\def\R{\mathbb{R}}
\begin{document}
\title [Flow of finite-dimensional algebras]
{Flow of finite-dimensional algebras}

\author {M. Ladra, U.A. Rozikov}

\address{M.\ Ladra\\ Department of Algebra, University of Santiago de Compostela, 15782, Spain.}
 \email {manuel.ladra@usc.es}

 \address{U.\ A.\ Rozikov\\ Institute of mathematics, 29, Do'rmon Yo'li str., 100125,
Tashkent, Uzbekistan.} \email {rozikovu@yandex.ru}

\begin{abstract} Each finite-dimensional algebra can be identified 
to the cubic matrix given by structural constants defining the multiplication 
between the basis elements of the algebra.  In this paper we introduce the 
notion of flow (depending on time) of finite-dimensional 
algebras. This flow 
can be considered as a particular case of (continuous-time) dynamical system
whose states are finite-dimensional algebras with matrices of structural constants satisfying an analogue
of Kolmogorov-Chapman equation. These flows of algebras (FA) can also 
be considered as deformations of algebras
with the rule (the evolution equation) given 
by  Kolmogorov-Chapman equation.
We mainly use the multiplications of cubic matrices 
which were introduced by Maksimov and consider Kolmogorov-Chapman 
equation with respect to these multiplications.
If all cubic matrices of structural constants are stochastic (there are several
kinds of stochasticity) then the corresponding FA is called stochastic FA (SFA).
We define SFA generated by
known quadratic stochastic processes.
For some multiplications of cubic matrices we reduce Kolmogorov-Chapman equation 
given for cubic matrices to the equation given for square 
matrices. Using this result many FAs are given  (time
homogenous, time non-homogenous, periodic, etc.).
For a periodic FA we construct a continuum set of finite-dimensional algebras and show that
the corresponding discrete time FA is
dense in the set. Moreover, we give a construction of an FA which contains algebras of arbitrary (finite) dimension.
For several
FAs we describe the time depending behavior (dynamics) of the properties to be baric,
limit algebras, commutative,  evolution algebras or associative algebras.
\end{abstract}

\subjclass[2010] {17D92; 17D99; 60J27}

\keywords{Finite-dimensional algebra; quadratic stochastic process; cubic matrix; time; Kolmogorov-Chapman
equation; baric algebra; property transition; commutative; associative.}

\maketitle

\section{Introduction}

 A dynamical system is a system in which a rule describes the time dependence of the states in the system.
In this paper we consider a dynamical system such that at any given time it
has the state which is an finite-dimensional algebra. The evolution rule of this
dynamical system is given by an evolution (Kolmogorov-Chapman) equation
that describes what future algebras (states) follow from the current algebra.

We note that to each finite-dimensional algebra corresponds a cubic matrix of structural constants.
This cubic matrix generates an evolution quadratic operator.  This is why the
dynamical systems generated by quadratic operators have been a rich
source of analysis for the study of dynamical properties and
modeling in different domains, such as population dynamics
\cite{B,FG,K},  physics \cite{UR}, economy \cite{D} or mathematics \cite{ES,GN,J,ly}.

Etherington introduced the
formal language of abstract algebra to the study of genetics in his
series of seminal papers \cite{e1,e3}. In recent years many
authors have tried to investigate the difficult problem of
classification of these algebras. The most comprehensive references
for the mathematical research done in this area are \cite{ly,m,t,w}.

Recently  in \cite{CLR,Mu,ORT,RM,RM1} some chains of evolution algebras were introduced and studied.
In each of these papers the matrices of structural constants (depending on pair of time $(s, t)$) are square or rectangular and satisfy the
Kolmogorov-Chapman equation. In other words, a chain of evolution algebras is a continuous-time dynamical
system which in any fixed time is an evolution algebra. It is known that if a matrix satisfying
the Kolmogorov-Chapman equation is stochastic, then it generates a Markov process.

In \cite{ht}, the authors investigated time evolution of non-Markov processes as
they occur in coarse-grained description of open and closed systems. Some aspects
of the theory are illustrated for the two-state process and Gauss process.

In this paper we generalize the notion of chain of evolution algebras (given for algebras with \emph{rectangular} matrices)
to a notion of flow of arbitrary finite-dimensional algebras (i.e.
their matrices of structural constants are \emph{cubic} matrices).
Since the matrix of structural constants of each flow of algebras has a
general form, the non-Markov processes of \cite{ht} can be, in particular,
obtained by structural constants matrices
of a chain or a flow of algebras. Thus they can be used in biology and physics.

The paper is organized as follows. In Section~\ref{S:definitions} we give the main definitions and examples related to
quadratic stochastic processes, several kinds of multiplications of cubic matrices, an alternative
definition of quadratic process and flow of algebras (FAs).
We compare the FAs with deformations of algebras. FA is defined by a family (depending on time) of cubic matrices of structural
 constants, which satisfy an analogue of Kolmogorov-Chapman equation. Since there are several types of multiplication of cubic matrices,
one has to fix a multiplication first and then consider Kolmogorov-Chapman equation
with respect to this multiplication. We mainly use the multiplications which were introduced by Maksimov \cite{Mak}.
We define a stochastic FA generated by 
known quadratic stochastic processes.
In Section~\ref{S:reduction} for some multiplications of cubic matrices we reduce Kolmogorov-Chapman
equation given for cubic matrices to an equation given for square 
matrices. Using this result many examples of FAs are given.
In Section~\ref{S:dynamics} for several
FAs we describe time depending behavior of some properties of algebras in FA.

\section{Definitions and examples}\label{S:definitions}
\subsection{Quadratic stochastic operators}
Let $E=\{1,\dots,m\}$ be a finite set.
A \emph{quadratic stochastic operator} (QSO) of a free population \cite{K,ly} is a
(quadratic) mapping from the simplex
\[ S^{m-1}=\{x=(x_1,\dots,x_m)\in \R^m: x_i\geq 0, \ \sum^m_{i=1}x_i=1 \} \]
into itself, of the form
\[
 V: x_k'=\sum^m_{i,j=1}p_{ij,k}x_ix_j, \qquad (k=1,\dots,m),
\]
where $p_{ij,k}$ are the coefficients of heredity and
\begin{equation}\label{as1}
 p_{ij,k}\geq 0, \qquad \sum^m_{k=1}p_{ij,k}=1, \qquad  (i,j,k=1,\dots,m).
\end{equation}
Matrix $(p_{ijk})$ satisfying (\ref{as1}) is called stochastic.
  
 Note that each element $x\in S^{m-1}$ is a probability
distribution on $E$.

\begin{ex}\label{e1} Take $E=\{1,2\}$, $\epsilon\in [0,1/2]$ and consider the QSO, $V \colon S^1\to S^1$, defined as
\begin{align*}
x_1' & =(1-2\epsilon)x_1^2+(1-2\epsilon)x_1x_2,\\
x_2' & =2\epsilon x_1^2+(1+2\epsilon)x_1x_2+x_2^2.
\end{align*}

\end{ex}
In \cite{GMR} (see also \cite{MG}) a review of the theory of QSOs
is given. Note that each quadratic stochastic
operator can be uniquely defined by a stochastic matrix ${\mathbf
P}= \{p_{ij,k}\}^m_{i,j,k=1}$. In \cite{GaR} a constructive
description of ${\mathbf P}$ (i.e. a QSO) is given. This
construction depends on cardinality of $E$, which can be finite or
continual. Some particular cases of this construction were defined
in \cite{GN}.

\subsection{Quadratic stochastic processes}
In this subsection we describe quadratic stochastic processes,
with continuous time, which are related with QSOs
as well as Markov processes with linear operators, this subsection is
based on works \cite{G,MG,S3}.

\begin{defn}\label{d1}  A family $\left\{P^{[s,t]}_{ij,k}: i,j,k\in E, \,
s,t\in \mathbb R_+,  \, t-s\geq 1\right\}$ of functions $P^{[s,t]}_{ij,k}$ with initial state $x^{(0)}\in S^{m-1}$
is called a quadratic stochastic process (QSP) if, for fixed $s,t\in \mathbb R_+$, it satisfies the following conditions:
\begin{itemize}
\item[(i)] $P^{[s,t]}_{ij,k}=P^{[s,t]}_{ji,k}$ for any $i,j,k\in E$;
\item[(ii)] $P^{[s,t]}_{ij,k}\geq 0$ and $\sum_{k\in E}P^{[s,t]}_{ij,k}=1$ for any $i,j,k\in E$;
\item[(iii)] An analogue of the Kolmogorov-Chapman equation: there are two types: for the initial point $x^{(0)}$ and
$s<r<t$ such that $t-r\geq 1$, $r-s\geq 1$ one has
\item [(iii$_A$)]
\[ P^{[s,t]}_{ij,k}=\sum_{m,l\in E}P^{[s,r]}_{ij,m}P^{[r,t]}_{ml,k}x^{(r)}_l,\]
where $x^{(r)}_k$ is defined as
\[x_k^{(r)}=\sum_{i,j\in E}P^{[0,r]}_{ij,k}x_i^{(0)}x_j^{(0)};\]
\item [(iii$_B$)]
\[ P^{[s,t]}_{ij,k}=\sum_{m,l,g,h\in E}P^{[s,r]}_{im,l}P^{[s,r]}_{jg,h}P^{[r,t]}_{lh,k}x^{(s)}_mx^{(s)}_g.\]
\end{itemize}
 \end{defn}
 The QSP is called of type (A) (resp. type (B)) if it satisfies
 the equation (iii$_A$) (resp. (iii$_B$)). The equations (iii$_A$) and (iii$_B$) can be interpreted as different laws of the
 behavior of the `offspring'.

  Here we give some known examples
of different types of QSPs.

\begin{ex}\label{e2} Let $E=\{1,2\}$ and
$(x,1-x)$ be an initial distribution on $E$, $0\leq x\leq 1$.
Consider the QSO of Example~\ref{e1} and the following system of transition probabilities:
\begin{align*}
P^{[s,t]}_{11,1} & =\frac{(1-2\epsilon)^{t-s}}{
2^{t-s-1}}\left[\left(2^{t-s-1}-1\right)(1-2\epsilon)^sx+1\right];\\
P^{[s,t]}_{12,1} & =P^{[s,t]}_{21,1}=\frac{(1-2\epsilon)^{t-s}}{
2^{t-s-1}}\left[\left(2^{t-s-1}-1\right)(1-2\epsilon)^sx+\frac{1}{2}\right];\\
P^{[s,t]}_{22,1} & =\frac{(1-2\epsilon)^{t-s}}{
2^{t-s-1}}\left(2^{t-s-1}-1\right)x;\\
P^{[s,t]}_{ij,2} & =1-P^{[s,t]}_{ij,1},  \qquad \qquad \qquad i,j=1,2.
\end{align*}
For $\epsilon\in [0,1/2]$ these transition probabilities generate
a QSP which is of type (A) and type (B), simultaneously. In this
case we have
\[  x_1^{(t)}=(1-2\epsilon)^tx, \qquad x_2^{(t)}=1-(1-2\epsilon)^tx.\]
\end{ex}

\begin{ex}\label{e3} Let $E=\{1,2\}$ and
$(x,1-x)$ be an initial distribution on $E$, $0\leq x\leq 1$.
Consider the following system of transition functions:
\begin{align*}
P^{[s,t]}_{11,1} & =\frac{\epsilon^{t-s}}{
2^{t-s-1}}\cdot \frac{s+1}{t+1}\cdot \left[\left(2^{t-s-1}-1\right)\cdot \frac{\epsilon^s}{s+1}x+1\right];\\
P^{[s,t]}_{12,1} & =P^{[s,t]}_{21,1}=\frac{\epsilon^{t-s}}{
2^{t-s-1}}\cdot \frac{s+1}{t+1}\cdot\left[\left(2^{t-s-1}-1\right)\cdot \frac{\epsilon^s}{s+1}x+\frac{1}{2}\right];\\
P^{[s,t]}_{22,1} & =\frac{2^{t-s-1}-1}{
2^{t-s-1}}\cdot \frac{\epsilon^t}{t+1}x;\\
P^{[s,t]}_{ij,2} & =1-P^{[s,t]}_{ij,1}, \qquad \qquad \qquad i,j=1,2.
\end{align*}
These functions generate a QSP of type (A) and (B) for any $\epsilon\in [0,1]$. Moreover,
\[ x_1^{(t)}=\frac{\epsilon^t}{t+1}x, \qquad  x_2^{(t)}=1-\frac{\epsilon^t}{t+1}x, \qquad t\geq 1.\]
\end{ex}

\begin{ex}\label{e4}  Let $E=\{1,2,3\}$ and
$(x_1,x_2,1-x_1-x_2)$ be an initial distribution on $E$, where
$x_1\geq 0$, $x_2\geq 0$, $x_1+x_2\leq 1$. Consider the following system
of transition probabilities:
\begin{align*}
P^{[s,t]}_{11,1} & = 2\epsilon^{t-s}+\frac{2^{t-s-1}-1}{
2^{t-s-1}}x_1^{(t+1)};\\
P^{[s,t]}_{12,1}& =P^{[s,t]}_{13,1}=\epsilon^{t-s}+\frac{2^{t-s-1}-1}{
2^{t-s-1}}x_1^{(t+1)};\\
P^{[s,t]}_{22,1}& =P^{[s,t]}_{23,1}=P^{[s,t]}_{33,1}=\frac{2^{t-s-1}-1}{
2^{t-s-1}}x_1^{(t+1)};\\
P^{[s,t]}_{11,2} & =P^{[s,t]}_{13,2}=P^{[s,t]}_{33,2}=\frac{2^{t-s-1}-1}{
2^{t-s-1}}x_2^{(t+1)};\\
P^{[s,t]}_{22,2} & =2\epsilon^{t-s}+\frac{2^{t-s-1}-1}{
2^{t-s-1}}x_2^{(t+1)};\\
P^{[s,t]}_{12,2}& =P^{[s,t]}_{23,2}=\epsilon^{t-s}+\frac{2^{t-s-1}-1}{
2^{t-s-1}}x_2^{(t+1)};\\
P^{[s,t]}_{ij,3}& =1-P^{[s,t]}_{ij,1}-P^{[s,t]}_{ij,2}, \qquad \qquad \qquad i,j=1,2,3,
\end{align*}
where $x_1^{(t)}=(2\epsilon)^tx_1,  \ \
x_2^{(t)}=(2\epsilon)^tx_2$. For $\epsilon\in [0,1/2]$, these
transition probabilities generate a QSP which is of type (A), but
is not of type (B).
\end{ex}
 \begin{ex}\label{e5} Let $E=\{1,2,3\}$ and
$(x_1,x_2,1-x_1-x_2)$ be an initial distribution on $E$, where
$x_1\geq 0, \, x_2\geq 0, \, x_1+x_2\leq 1$. Consider the following system
of transition probabilities:
\begin{align*}
\widetilde{P}^{[s,t]}_{11,1} & =2\epsilon^{t-s}+\frac{2^{t-s-1}-1}{
2^{t-s-1}}x_1^{(t)}; \\
\widetilde{P}^{[s,t]}_{12,1} & =\widetilde{P}^{[s,t]}_{13,1}=\epsilon^{t-s}+\frac{2^{t-s-1}-1}{
2^{t-s-1}}x_1^{(t)};\\
\widetilde{P}^{[s,t]}_{22,1} & =\widetilde{P}^{[s,t]}_{23,1}=\widetilde{P}^{[s,t]}_{33,1}=\frac{2^{t-s-1}-1}{
2^{t-s-1}}x_1^{(t)};\\
\widetilde{P}^{[s,t]}_{11,2} & =\widetilde{P}^{[s,t]}_{13,2}=\widetilde{P}^{[s,t]}_{33,2}=\frac{2^{t-s-1}-1}{
2^{t-s-1}}x_2^{(t)};\\
\widetilde{P}^{[s,t]}_{22,2} & =2\epsilon^{t-s}+\frac{2^{t-s-1}-1}{
2^{t-s-1}}x_2^{(t)};\\
\widetilde{P}^{[s,t]}_{12,2}& =\widetilde{P}^{[s,t]}_{23,2}=\epsilon^{t-s}+\frac{2^{t-s-1}-1}{
2^{t-s-1}}x_2^{(t)};\\
\widetilde{P}^{[s,t]}_{ij,3} & =1-\widetilde{P}^{[s,t]}_{ij,1}-\widetilde{P}^{[s,t]}_{ij,2}, \qquad \qquad \qquad  i,j=1,2,3,
\end{align*}
where $x_1^{(t)}=(2\epsilon)^tx_1,  \ \
x_2^{(t)}=(2\epsilon)^tx_2$. For $\epsilon\in [0,1/2]$ these
transition probabilities generate a QSP which is of type (B), but
is not of type (A).
\end{ex}

\subsection{Algebras and Cubic matrices}

Let $K$ be a field, and let $\mathcal A$ be a vector space over $K$ equipped with an additional
binary operation from $\mathcal A\times \mathcal A$ to $\mathcal A$, denoted
by $\cdot$ (we  will simply write $xy$ instead $x\cdot y$).  Then $\mathcal A$ is an \emph{algebra} over $K$ if the following identities
hold for every elements $x, y$, and $z$ of $\mathcal A$, and every elements $a$ and $b$ of $K$:
 $(x + y)z = xz + yz$, $x(y + z) = xy + xz$, $(ax)(by) = (ab) (xy)$.

For algebras over a field, the bilinear multiplication from $\mathcal A\times \mathcal A$ to $\mathcal A$,
is completely determined by the multiplication of the basis elements of $\mathcal A$. Conversely, once a basis for $\mathcal A$
 has been chosen, the products of the basis elements can be set arbitrarily, and then extended in a unique
 way to a bilinear operator on $\mathcal A$ so that the resulting multiplication satisfies the algebra laws.

Given a field $K$, any finite-dimensional algebra can be specified up
to isomorphism by giving its dimension (say $m$), and specifying $m^3$ structure constants $c_{ijk}$, which are scalars.
These structure constants determine the multiplication in $\mathcal A$ via the following rule:
\[{e}_{i} {e}_{j} = \sum_{k=1}^m c_{ijk} {e}_{k},\]
where $e_1,\dots,e_m$ form a basis of $\mathcal A$.

Thus the multiplication of a finite-dimensional algebra is given by a cubic matrix $(c_{ijk})$.
Following \cite{Mak} (see also \cite{Pa})  recall the notion of cubic matrix and different associative multiplication rules of cubic matrices:
a cubic matrix $Q=(q_{ijk})_{i,j,k=1}^m$ is a $m^3$-dimensional vector which can be uniquely written as
\[Q=\sum_{i,j,k=1}^m   q_{ijk} (i,j,k),\]
where $(i,j,k)$ denotes the cubic unit (basis) matrix, i.e. $(i,j,k)$ is a $m^3$ cubic matrix whose
$(i,j,k)$th entry is equal to 1 and all other entries are equal to 0.
Define the following multiplications for such basis matrices:
\begin{itemize}
  \item[(C)]
  \[(i,j,k)\cdot (l,n,r)=
  \begin{cases}
  (i,j,r), & \text{if} \ \ k=l, \ \ \text{and} \ \  j=n,\\
  \quad 0, & \text{otherwise}.
  \end{cases} \]
Extending this multiplication to arbitrary cubic matrices
$$A=(a_{ijk})_{i,j,k=1}^m, \ \ B=(b_{ijk})_{i,j,k=1}^m, \ \
C=(c_{ijk})_{i,j,k=1}^m,$$ we get that the entries of $C=AB$ can be written as
  \begin{equation}\label{C}
  c_{ijr}=\sum_{k=1}^ma_{ijk}b_{kjr}.
  \end{equation}
   \item[(D)]
    \[(i,j,k)\cdot (l,n,r)=
    \begin{cases}
  (i,j,r), & \text{if} \ \ k=l,\\
  \quad 0, & \text{otherwise}.
  \end{cases}\]
In this case
  \begin{equation}\label{D}
  c_{ijr}=\sum_{k,n=1}^ma_{ijk}b_{knr}.
  \end{equation}
   \item[(E)]
   \[(i,j,k)\cdot (l,n,r)=
    \begin{cases}
  (i,n,r),& \text{if} \ \ k=l,\\
  \quad 0, & \text{otherwise}.
 \end{cases}\]
Consequently
  \[
  c_{inr}=\sum_{j,k=1}^ma_{ijk}b_{knr}.
  \]
\end{itemize}
\begin{rk}\label{r1} In \cite{Mak} there is more general multiplication:
 \[(i,j,k)\cdot (l,n,r)=
 \begin{cases}
  (i,j\circ n,r), & \text{if} \ \ k=l,\\
  \qquad 0, & \text{otherwise},
  \end{cases}
 \]
 where $\circ$ is an arbitrary associative operation on $E=\{1,2,\dots,m\}$.
 In case $j\circ n=j$ (resp. $=n$) this multiplication coincides with (D) (resp. (E)).
 Recently in \cite{Ba} the authors provided 15 associative multiplication rules of cubic matrices,
 some of them coincide with Maksimov's multiplication rules.
 In \cite{Pa} there are other kinds of multiplications for cubic stochastic matrices.
 To avoid many cases, in this paper we consider only multiplications (C), (D), (E).
 \end{rk}
\subsection{Flow of algebras}

Consider a family $\left\{\mathcal A^{[s,t]}:\ s,t \in \R,\ 0\leq s\leq t
\right\}$ of arbitrary $m$-dimensional algebras over the field $\R$,
with basis $e_1,\dots,e_m$ and multiplication table
\begin{equation}\label{1}
 e_ie_j =\sum_{k=1}^mc_{ijk}^{[s,t]}e_k, \ \ i,j=1,\dots,m.\end{equation}
 Here parameters $s,t$ are
considered as time.

Denote by
$\M^{[s,t]}=\left(c_{ijk}^{[s,t]}\right)_{i,j,k=1,\dots,m}$ the matrix
of structural constants of $\mathcal A^{[s,t]}$.

\begin{defn}\label{d2}
Define the following flow of algebras (FA):
\begin{itemize}
\item[$\bullet$] A family $\left\{\mathcal A^{[s,t]}:\ s,t \in \R,\ 0\leq s\leq t
\right\}$ of $m$-dimensional algebras over the field $\R$
is called a stochastic flow of algebras (SFA) if the matrices
$\M^{[s,t]}$ of structural constants satisfy the conditions
of Definition~\ref{d1}. The SFA is called of type (A) (resp. type (B)) if
it corresponds to a QSP of type (A) (resp. (B)).

\item[$\bullet$] A family $\left\{\mathcal A^{[s,t]}:\ s,t \in \R,\ 0\leq s\leq t
\right\}$ of $m$-dimensional algebras over the field $\R$
is called an FA if the matrices
$\M^{[s,t]}$ of structural constants satisfy the
Kolmogorov-Chapman equation (for cubic matrices):
\begin{equation}\label{KC}
\M^{[s,t]}=\M^{[s,\tau]}\M^{[\tau,t]}, \qquad \text{for all} \ \ 0\leq s<\tau<t.
\end{equation}
The FA is called of type (C) (resp. type (D), (E)) if
the equation \eqref{KC} is considered with respect to multiplication of type (C) (resp. (D), (E)) mentioned above.
\end{itemize}
\end{defn}
Let us give following useful remarks.
\begin{rk} We note that
\begin{itemize}
\item[$\bullet$] SFA corresponds to a family of stochastic matrices (i.e. satisfy condition {\rm (ii)} of Definition~\ref{d1}).
But in general for  FAs we do not assume that the corresponding matrices are stochastic, they only satisfy \eqref{KC}.
\item[$\bullet$] Since there are several different kinds of multiplications of cubic matrices (see Remark~\ref{r1}),
 Definition~\ref{d2} gives several types of FA.  To construct an FA one has to fix a multiplication rule first and then consider
 equation \eqref{KC} with respect to the fixed multiplication.
\end{itemize}
\end{rk}
\begin{rk} {\rm Interpretations:} The multiplication $e_ie_j$ given by the equality \eqref{1}
can be considered as action of the particle $e_i$ on the particle $e_j$ at time $s$, as an interaction process, then
with rate $c_{ijk}^{[s,t]}$ a particle $e_k$ appears at time $t$. Kolmogorov-Chapman equation \eqref{KC} gives 
time depending evolution law of the interacting process (dynamical system).
\end{rk}

\begin{rk} {\rm Comparison with the deformations of algebras:} In case of finite-dimensional
algebras a deformation is the transformation of a
given algebra to another algebra (with the same dimension) \cite{Ger}. This is equivalent to change (transform)
a given multiplication table (structural constants) to another one. Often, the algebra at $t = 0$ is considered as
an initial algebra and the algebra at the current time $t$ as the current algebra.
In \cite{Ger} deformations of an algebra $A$ are given by a bilinear function
$f_t$, i.e., one considers the algebra $A_t$ with
 multiplication $f_t$ as the generic element of a ``one-parameter family of
 deformations of $A$''. Thus deformations of an algebra
 are given by the rule $f_t$ (which has an explicit form), similarly a flow of algebras is given by $\mathcal M^{[s,t]}$ with the rule \eqref{KC}.
Some relations of an FA can be also seen in the following example: a plane deformation \cite{O} is restricted to the plane described by the basis vectors $e_1$, $e_2$.
It is known that the deformation gradient of this plane deformation has the form
\[ \mathbf{F}=\begin{pmatrix}
\cos\theta & \sin\theta & 0 \\
-\sin\theta & \cos\theta & 0 \\
 0 & 0 & 1
 \end{pmatrix}
    \begin{pmatrix}
    \lambda_1 & 0 & 0 \\[2mm]
     0 & \lambda_2 & 0 \\[2mm]
     0 & 0 & 1 \end{pmatrix},
\]
where $\theta$ is the angle of rotation and $\lambda_1, \lambda_2$ are the principal stretches.
Comparing this matrix with matrix \eqref{defo} (see below) one can see that
the corresponding FA and the plane deformation have similar matrices.
\end{rk}
{\bf Alternatives to Definition~\ref{d1} of QSP:} Following \cite{Mak} one can define several kinds of stochastic cubic  matrices:
a cubic matrix $P=(p_{ijk})_{i,j,k=1}^m$ is called stochastic of type $(1,2)$ if
\[p_{ijk}\geq 0, \qquad  \sum_{i,j=1}^mp_{ijk}=1, \ \ \text{for all} \ k.\]
The cubic stochastic matrices of type $(1,3)$ and $(2,3)$ can be defined similarly.  Moreover
a cubic matrix can be called stochastic\footnote{This definition of stochaticity has been already used in (\ref{as1}).} if
\[p_{ijk}\geq 0, \qquad  \sum_{k=1}^mp_{ijk}=1, \ \ \text{for all} \  i,j.\]
The last one can be also given with respect to first and second index.
Maksimov \cite{Mak} also defined twice stochastic matrix:
a (2,3) stochastic cubic matrix is called twice stochastic if
\[\sum_{i=1}^m p_{ijk}=\frac{1}{m}, \qquad \text{for all} \ j,k.\]
We denote by $\mathcal S$ the set of all these definitions of stochasticity and
denote by $\mathbb M$ the set
of all known multiplication rules of cubic matrices.
Now we give an alternative definition of QSP:
\begin{defn}\label{d1a}
A family $\{P_{ijk}^{[s,t]}: i,j,k=1,\dots, m; \ s,t\in\R_+\}$ is called a QSP of type $(\sigma,\mu)$ if for each time $s$ and $t$ the cubic matrix
$\left(P_{ijk}^{[s,t]}\right)$ is stochastic in sense $\sigma\in\mathcal S$ and satisfies Kolmogorov-Chapman equation \eqref{KC} with
respect to the multiplication $\mu\in \mathbb M$.

\end{defn}
\begin{rk} Different multiplication rules introduced by Maksimov were aimed to ensure
that stochastic nature of the cubic matrix
product remains when multiplying them.
Fix a pair $(\sigma,\mu)\in \mathcal S\times\mathbb M$ then Definition 3 makes sense for this
choice  $(\sigma,\mu)$ if there exists at least one family
$\left(P_{ijk}^{[s,t]}\right)$ of cubic matrices
which are stochastic in sense $\sigma$ and satisfies Kolmogorov-Chapman equation with respect to multiplication $\mu$.
Denote by $\mathbb Q(\sigma,\mu)$ the set of all QSP of type $(\sigma,\mu)$.
An interesting problem is to find pairs $(\sigma,\mu)$ with property $\mathbb Q(\sigma,\mu)=\emptyset$. 
This problem will be considered in a separate work.\footnote{This remark is added corresponding to a suggestion of reviewer.}
\end{rk}

Comparing with Definition~\ref{d1} we note that in this definition of QSP we
do not assume $t-s\geq 1$ and it does not
depend on an initial point $x^{(0)}$.
For the theory of QSPs in the sense of Definition~\ref{d1} see \cite{MG}.
Such theory should be developed for QSPs defined in Definition~\ref{d1a}.

\begin{defn}
 An FA is called a (time) homogenous FA if
the matrix $\M^{[s,t]}$ depends only on $t-s$. In this case we write
$\M^{[t-s]}$.
\end{defn}
Now we shall give several concrete examples of FAs.

First we note that all above-mentioned examples of QSP generate
SFAs. 

The following is a new one:

\begin{ex}\label{e6} Let $E=\{1,2,\dots,m\}$. Consider a family of $m$-dimensional stochastic vectors:
 $a(t)=(a_1(t), a_2(t), \dots, a_m(t))$,
i.e. $a_i(t)\geq 0$, $\sum_i a_i(t)=1$ for any $t\geq 0$. For each
pair $s,t$ we define a stochastic matrix
$Q^{[s,t]}=(q_{il}^{[s,t]})_{i,l\in E}$, where
$q_{il}^{[s,t]}=a_l(t)$ for all $i\in E$, i.e. it does not depend
on $s$.  It is easy to see that this matrix satisfies the
Kolmogorov-Chapman equation:
\[Q^{[s,t]}=Q^{[s,\tau]}Q^{[\tau,t]}, \qquad \text{for all} \ \ 0\leq s<\tau<t.\]
Now define functions
\[P^{[s,t]}_{ij,k}=q_{ik}^{[s,t]}=a_k(t).\]
One can check that the defined family $\{P^{[s,t]}_{ij,k}\}$ is
a QSP. Thus it generates a SFA.
\end{ex}

Now we take $m=2$, i.e. $E=\{1,2\}$ and construct examples of a flow of two-dimensional algebras of type (C).
In this case  equation \eqref{KC} has the following form:
\[
c_{ijr}^{[s,t]}=c_{ij1}^{[s,\tau]}c_{1jr}^{[\tau,t]}+c_{ij2}^{[s,\tau]}c_{2jr}^{[\tau,t]}, \qquad i,j,r=1,2.
\]
This is a quadratic system of eight equations with eight unknown functions $c_{ijr}^{[s,t]}$ of two variables
$s,t$, $0\leq s<t$. It is easy to see that the equations for $j=1$ and $j=2$ are independent. Therefore it suffices to solve the system only for $j=1$.
Denoting
$a_{ir}^{[s,t]}=c_{i1r}^{[s,t]}$ we get
\begin{align}\label{fe}
a_{11}^{[s,t]} & =a_{11}^{[s,\tau]}a_{11}^{[\tau,t]}+a_{12}^{[s,\tau]}a_{21}^{[\tau,t]}, \notag\\
a_{12}^{[s,t]} & =a_{11}^{[s,\tau]}a_{12}^{[\tau,t]}+a_{12}^{[s,\tau]}a_{22}^{[\tau,t]},\\
a_{21}^{[s,t]} & =a_{21}^{[s,\tau]}a_{11}^{[\tau,t]}+a_{22}^{[s,\tau]}a_{21}^{[\tau,t]},\notag\\
a_{22}^{[s,t]} & =a_{21}^{[s,\tau]}a_{12}^{[\tau,t]}+a_{22}^{[s,\tau]}a_{22}^{[\tau,t]}.\notag
 \end{align}

The full set of solutions to the system \eqref{fe} is not known yet. But there is a very wide class of its solutions
see \cite{CLR,ORT,RM}. One of these known solutions is the following (non-stochastic, time-homogeneous) matrix:
\begin{equation}\label{defo}
\begin{pmatrix}
a_{11}^{[s,t]}& a_{12}^{[s,t]}\\
a_{21}^{[s,t]}& a_{22}^{[s,t]}
\end{pmatrix}=\begin{pmatrix}
\cos (t-s)& \sin (t-s)\\
-\sin (t-s)& \cos (t-s)
\end{pmatrix}.
\end{equation}
Thus the following is a wide class of examples:
\begin{ex}\label{e7}
 Any pair $(a_{ij}^{[s,t]}), (\bar a_{ij}^{[s,t]})$ of solutions of the system \eqref{fe}
generates an FA of type (C) corresponding to the matrix $\mathcal M^{[s,t]}$ with entries
\[  c_{i1r}^{[s,t]}=a_{ir}^{[s,t]}, \qquad  c_{i2r}^{[s,t]}=\bar a_{ir}^{[s,t]}.\]
\end{ex}

In the next section we shall give general arguments to solve the equation \eqref{KC} and after that we shall give
examples of FAs of type (D) and (E).

Note that for each fixed $s,t$ the matrix $\M^{[s,t]}$ of a SFA defines an algebra $\mathcal A^{[s,t]}$ over $\R$.
This algebra can be considered as an \emph{evolution algebra of a free population} (see \cite[page 15  and Chapter 3]{ly}), but Lyubich 
used for this algebra the notion {\it evolutionary algebra}.

A \emph{character} for an algebra $A$ is a nonzero multiplicative
linear form on $A$, that is, a nonzero algebra homomorphism from $A$
to $\R$ \cite{ly}. Not every algebra admits a character. For
example, an algebra with the zero multiplication has no character.

\begin{defn}
 A pair $(A, \chi)$ consisting of an algebra $A$ and a
character $\chi$ on $A$ is called a \emph{baric algebra}. The
homomorphism $\chi$ is called the weight (or baric) function of
$A$ and $\chi(x)$ the weight (baric value) of $x$.
\end{defn}
In \cite{ly} for the evolution algebra of a free population it is
proven that there is a character $\chi(x)=\sum_i x_i$, therefore
that algebra is baric. Moreover, such an algebra has the following properties:
commutative, but non-associative in general.

The following proposition is known (see \cite{w}):
\begin{pro}\label{pb}
A finite-dimensional commutative real algebra $A$ is a baric
algebra if and only if $A$ has a basis $e_1,\dots , e_m$ such that the constants $c_{ijk}$ defined by
\[{e}_{i} {e}_{j} = \sum_{k=1}^m c_{ijk} {e}_{k}\]
satisfy the relation
\[\sum_{k=1}^m c_{ijk}=1.\]
\end{pro}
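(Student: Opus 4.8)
The plan is to prove the two implications separately, reading the character off the basis in one direction and constructing a convenient basis out of the character in the other; everything reduces to elementary linear algebra together with the multiplicativity of the weight function.

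For the ``if'' direction I would argue as follows. Assume $A$ has a basis $e_1,\dots,e_m$ with $\sum_{k=1}^m c_{ijk}=1$ for all $i,j$, and define the linear form $\chi\colon A\to\R$ by $\chi(e_k)=1$ for every $k$, i.e.\ $\chi\big(\sum_k x_k e_k\big)=\sum_k x_k$. This $\chi$ is visibly nonzero, and on basis vectors one computes $\chi(e_ie_j)=\chi\big(\sum_k c_{ijk}e_k\big)=\sum_k c_{ijk}=1=\chi(e_i)\chi(e_j)$. Since both the product on $A$ and the map $(x,y)\mapsto\chi(x)\chi(y)$ are bilinear and agree on all pairs of basis vectors, they agree everywhere, so $\chi$ is an algebra homomorphism; hence $(A,\chi)$ is a baric algebra.

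For the ``only if'' direction, suppose $(A,\chi)$ is baric. Because $\chi\neq 0$ there is $v\in A$ with $\chi(v)\neq 0$; set $u=v/\chi(v)$, so $\chi(u)=1$ and $A=\R u\oplus\ker\chi$. I would then pick any basis $v_2,\dots,v_m$ of the hyperplane $\ker\chi$ and put
\[
e_1=u,\qquad e_j=v_j+u\quad(j=2,\dots,m).
\]
The transition matrix from $\{u,v_2,\dots,v_m\}$ to $\{e_1,\dots,e_m\}$ is unitriangular, so $e_1,\dots,e_m$ is again a basis, and by construction $\chi(e_k)=1$ for every $k$. Writing $e_ie_j=\sum_k c_{ijk}e_k$ and applying $\chi$ then gives $\sum_{k=1}^m c_{ijk}=\chi(e_ie_j)=\chi(e_i)\chi(e_j)=1$, which is the desired relation. (Note the commutativity hypothesis is not actually used here; it is part of the setting rather than of the argument.)

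The only genuine obstacle is the choice of basis in the second part. The naive basis adapted to the splitting $A=\R u\oplus\ker\chi$ has $\chi$ vanishing on all but one of its vectors, which forces $\sum_k c_{ijk}=\chi(e_i)\chi(e_j)$ to be $0$ in most cases rather than $1$; the remedy is the shear $e_j=v_j+u$, which makes every basis vector have weight $1$, after which the sum $\sum_k c_{ijk}$ reads off $\chi(e_ie_j)$ directly. Once that trick is in place, both directions are routine.
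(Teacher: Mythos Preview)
Your argument is correct in both directions: the weight function $\chi(x)=\sum_k x_k$ in the ``if'' part and the shear $e_j=v_j+u$ in the ``only if'' part are exactly the standard devices, and your observation that commutativity plays no role in the proof is accurate.

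As for comparison with the paper: there is nothing to compare against. The paper does not prove this proposition; it states it as known and refers the reader to W\"orz-Busekros~\cite{w}. Your write-up is essentially the classical proof one finds in that reference, so it fits the paper's intent.
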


In the next sections we shall study the dynamics (depending on time) of properties of algebras in FAs.

\section{Reduction of Kolmogorov-Chapman's equations from cubic matrices to square ones} \label{S:reduction}
In this section for each multiplication (C), (D), (E) we show that equation
\eqref{KC} written for cubic matrices (not necessarily stochastic) can be reduced to
similar equations for square matrices (different for different multiplications).

\emph{Case (C)}. Let $\mathcal M_j^{[s,t]}=(c^{[s,t]}_{ijk})_{i,k=1}^m$ be the $j$th layer of the matrix $\mathcal M^{[s,t]}$.
  The following proposition is very useful:
\begin{pro} Any solution of the equation \eqref{KC} for multiplication of type (C) is
a direct sum of solutions of the following $m$ independent equations:
\[
\M_j^{[s,t]}=\M_j^{[s,\tau]}\M_j^{[\tau,t]}, \quad \text{for all} \quad 0\leq s<\tau<t, \quad j=1,\dots,m.
\]
\end{pro}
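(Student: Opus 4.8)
The plan is to unwind the definition of multiplication (C) and observe that it never couples different values of the middle index. By \eqref{C}, for cubic matrices $A,B$ the product $C=AB$ under (C) has entries $c_{ijr}=\sum_{k=1}^m a_{ijk}b_{kjr}$; note that the value $j$ of the middle index on the left is the same value appearing on both factors on the right. Applying this to \eqref{KC} gives, for all $i,j,r$ and all $0\le s<\tau<t$,
\[
c_{ijr}^{[s,t]}=\sum_{k=1}^m c_{ijk}^{[s,\tau]}\,c_{kjr}^{[\tau,t]}.
\]
First I would fix $j$ and reinterpret this as a statement about the $j$th layers $\M_j^{[s,t]}=(c_{ijk}^{[s,t]})_{i,k=1}^m$: the right-hand side is exactly the $(i,r)$ entry of the \emph{ordinary} matrix product of the $m\times m$ matrices $\M_j^{[s,\tau]}$ and $\M_j^{[\tau,t]}$, so the displayed identity is equivalent to $\M_j^{[s,t]}=\M_j^{[s,\tau]}\M_j^{[\tau,t]}$ for every $j$.

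Next I would record the converse bookkeeping. A cubic matrix is completely determined by, and reconstructed from, its $m$ layers, and since the product formula above involves no interaction between distinct values of $j$, one has $(AB)_j=A_jB_j$ for each $j$, where juxtaposition on the right is the usual matrix product. Hence the layer map $Q\mapsto(Q_1,\dots,Q_m)$ is an isomorphism from the algebra of $m\times m\times m$ cubic matrices under (C) onto the direct sum $\bigoplus_{j=1}^m M_m(\R)$ of $m$ copies of the full $m\times m$ matrix algebra. Under this identification \eqref{KC} splits into the system of the $m$ stated equations, one in each summand, and these are manifestly independent. Therefore $\{\M^{[s,t]}\}$ solves \eqref{KC} if and only if each layer family $\{\M_j^{[s,t]}\}$ solves the $j$th square-matrix equation, i.e. every solution of \eqref{KC} is the direct sum of its layers, each of which solves the corresponding reduced equation, and conversely any choice of layer solutions assembles into a solution of \eqref{KC}.

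There is essentially no genuine obstacle here: the argument is a direct unpacking of definition (C). The only points requiring a little care are matching the indexing convention — namely that the ``$j$th layer'' in the statement is precisely the slice obtained by freezing the middle index, which is exactly the index left invariant by multiplication (C) — and making explicit what ``direct sum of solutions'' means, namely that the solution set of \eqref{KC} under (C) is, via the layer decomposition, the Cartesian product of the solution sets of the $m$ reduced equations.
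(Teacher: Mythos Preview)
Your proof is correct and follows the same approach as the paper: both arguments simply observe that formula \eqref{C} leaves the middle index $j$ fixed, so the cubic equation \eqref{KC} decouples into $m$ independent square-matrix equations, one per layer. The paper compresses this into a single sentence, while you additionally spell out the algebra isomorphism with $\bigoplus_{j=1}^m M_m(\R)$ and the converse direction, but the underlying idea is identical.
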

\begin{proof} This is consequence of the equality \eqref{C}, which shows that layers are independent in the multiplication of type (C).
\end{proof}

\emph{Case (D).} For the cubic matrix $\mathcal M^{[s,t]}$ consider the square matrix  $\overline{\mathcal M}^{[s,t]}=(\bar c^{[s,t]}_{ik})$ with
\begin{equation}\label{cd}
\bar c^{[s,t]}_{ik}=\sum_{j=1}^m  c_{ijk}^{[s,t]}, \qquad  i,k=1,\dots,m.
\end{equation}

\begin{pro}\label{pD} Any solution of equation \eqref{KC} for the multiplication of type (D) can be given
by a solution of the system \eqref{cd} with a matrix $\overline{\mathcal M}^{[s,t]}=(\bar c^{[s,t]}_{ik})$ which satisfies
\begin{equation}\label{KCD}
\overline\M^{[s,t]}=\overline\M^{[s,\tau]} \ \overline\M^{[\tau,t]}, \ \ \text{for all} \quad 0\leq s<\tau<t.
\end{equation}
\end{pro}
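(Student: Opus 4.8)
The plan is to write \eqref{KC} for the multiplication of type (D) in coordinates, observe that the double summation produced by \eqref{D} factors, and then recover \eqref{KCD} simply by summing over the ``middle'' index. I would also record the reverse construction, since that is what makes the reduction useful for building concrete FAs of type (D).

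First, by \eqref{D} with $A=\M^{[s,\tau]}$, $B=\M^{[\tau,t]}$, $C=\M^{[s,t]}$, equation \eqref{KC} for type (D) reads
\[
c_{ijr}^{[s,t]}=\sum_{k,n=1}^{m}c_{ijk}^{[s,\tau]}c_{knr}^{[\tau,t]},\qquad 0\le s<\tau<t,\ \ i,j,r=1,\dots,m .
\]
The index $n$ enters only through $\sum_{n=1}^{m}c_{knr}^{[\tau,t]}=\bar c_{kr}^{[\tau,t]}$, which is exactly the defining relation \eqref{cd}, so this system is equivalent to $c_{ijr}^{[s,t]}=\sum_{k=1}^{m}c_{ijk}^{[s,\tau]}\bar c_{kr}^{[\tau,t]}$. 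Summing over $j$ and applying \eqref{cd} on the left and inside the right,
\[
\bar c_{ir}^{[s,t]}=\sum_{k=1}^{m}\Big(\sum_{j=1}^{m}c_{ijk}^{[s,\tau]}\Big)\bar c_{kr}^{[\tau,t]}=\sum_{k=1}^{m}\bar c_{ik}^{[s,\tau]}\bar c_{kr}^{[\tau,t]},
\]
i.e. $\overline\M^{[s,t]}=\overline\M^{[s,\tau]}\overline\M^{[\tau,t]}$, which is \eqref{KCD}. This settles the forward direction: the square matrix attached by \eqref{cd} to an arbitrary solution of \eqref{KC} of type (D) solves the ordinary matrix equation \eqref{KCD}.

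For the complementary construction I would start from any solution $\overline\M^{[s,t]}=(\bar c_{ik}^{[s,t]})$ of \eqref{KCD} and lift it to a cubic matrix compatible with \eqref{cd}. The simplest choice is $c_{ijk}^{[s,t]}:=\tfrac1m\bar c_{ik}^{[s,t]}$, and more generally $c_{ijk}^{[s,t]}:=\lambda_j\bar c_{ik}^{[s,t]}$ with scalars $\lambda_1,\dots,\lambda_m$ satisfying $\sum_{j}\lambda_j=1$: then \eqref{cd} holds by construction, and substituting into the factorized equation above gives $\sum_k c_{ijk}^{[s,\tau]}\bar c_{kr}^{[\tau,t]}=\lambda_j\sum_k\bar c_{ik}^{[s,\tau]}\bar c_{kr}^{[\tau,t]}=\lambda_j\bar c_{ir}^{[s,t]}=c_{ijr}^{[s,t]}$, so \eqref{KC} holds. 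Thus solving \eqref{KC} for type (D) really amounts to solving the square equation \eqref{KCD}.

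There is no delicate estimate in the argument; the only thing needing care is the precise meaning of ``can be given by'', since the projection \eqref{cd} from cubic to square matrices is many-to-one. The honest content is that \eqref{KC} (type (D)) and \eqref{KCD} are simultaneously solvable, that every cubic solution projects via \eqref{cd} to a square one, and that every square solution admits at least the uniform lift back. If one wanted to describe \emph{all} cubic lifts over a fixed $\overline\M$, the identity $c_{ijr}^{[s,t]}=\sum_k c_{ijk}^{[s,\tau]}\bar c_{kr}^{[\tau,t]}$ shows that each slice $(c_{ijk}^{[s,t]})_{i,k}$ must satisfy a one-sided relation over $\overline\M$ and that the slices must sum to $\overline\M^{[s,t]}$; carrying out that classification is the only genuinely nontrivial refinement, and it is not required for the proposition as stated.
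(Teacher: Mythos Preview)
Your argument is correct and follows exactly the paper's route: rewrite \eqref{KC} via \eqref{D}, collapse the inner sum over $n$ to $\bar c_{kr}^{[\tau,t]}$ using \eqref{cd}, and then sum over $j$ to obtain \eqref{KCD}. The explicit lift $c_{ijk}^{[s,t]}=\lambda_j\bar c_{ik}^{[s,t]}$ that you add is extra material not in the paper's proof of this proposition (the paper instead illustrates the reverse construction through Examples~\ref{e8}--\ref{e9} and Theorem~\ref{tA}).
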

\begin{proof}
 Using the multiplication rule \eqref{D}, we write the equation \eqref{KC} in the following form:
\begin{equation}\label{y}
c_{ijr}^{[s,t]}=\sum_{k=1}^m \left(c^{[s,\tau]}_{ijk}\sum_{n=1}^m c^{[\tau,t]}_{knr}\right)=
\sum_{k=1}^m c^{[s,\tau]}_{ijk}\bar c^{[\tau,t]}_{kr}.\end{equation}
Taking sum over $j$, from the last equality we get
 \[\bar c_{ir}^{[s,t]}=\sum_{k=1}^m \bar c^{[s,\tau]}_{ik}\bar c^{[\tau,t]}_{kr},\]
 this is the equality \eqref{KCD}.
\end{proof}
To illustrate Proposition~\ref{pD} we give the following example:
\begin{ex}\label{e8}
Take $m=2$ and an arbitrary function $\Psi(t)$, $t\geq 0$, with $\Psi(t)\ne 0$.
Consider the following square matrix:
\begin{equation}\label{ut}
\overline \M^{[s,t]}=\frac{1}{2} \begin{pmatrix*}[r]
\frac{\Psi(t)}{\Psi(s)} & -\frac{\Psi(t)}{\Psi(s)}\\[2mm]
-\frac{\Psi(t)}{\Psi(s)} & \frac{\Psi(t)}{\Psi(s)}
\end{pmatrix*}.
\end{equation}
It is easy to check that this matrix satisfies the equality \eqref{KCD}.
Write cubic matrix $\M^{[s,t]}$ for $m=2$ in the following convenient form:
\begin{equation}\label{edd}
\M^{[s,t]}= \begin{pmatrix}
c_{111}^{[s,t]} &c_{112}^{[s,t]}& \vline &c_{211}^{[s,t]} &c_{212}^{[s,t]}\\[3mm]
c_{121}^{[s,t]} &c_{122}^{[s,t]}& \vline &c_{221}^{[s,t]} &c_{222}^{[s,t]}
\end{pmatrix}.
\end{equation}
Using Proposition~\ref{pD} we shall construct $\M^{[s,t]}$ which corresponds to the taken matrix \eqref{ut}:
from \eqref{KC} (i.e. \eqref{y}) we get
\begin{align*}
c_{ij1}^{[s,t]} & = \  \ \frac{\Psi(t)}{2\Psi(\tau)}c_{ij1}^{[s,\tau]}-\frac{\Psi(t)}{ 2\Psi(\tau)}c_{ij2}^{[s,\tau]},   \qquad  i,j=1,2, \\
c_{ij2}^{[s,t]} & =-\frac{\Psi(t)}{ 2\Psi(\tau)}c_{ij1}^{[s,\tau]}+\frac{\Psi(t)}{2\Psi(\tau)}c_{ij2}^{[s,\tau]}, \qquad i,j=1,2.
\end{align*}

Consequently $c_{ij1}^{[s,t]}=-c_{ij2}^{[s,t]}$. Therefore
\[c_{ij1}^{[s,t]}=\frac{\Psi(t)}{\Psi(\tau)}c_{ij1}^{[s,\tau]} \Leftrightarrow \frac{c_{ij1}^{[s,t]}}{ \Psi(t)}=\frac{c_{ij1}^{[s,\tau]}}{\Psi(\tau)}.\]
It follows from the last equality  that $\frac{c_{ij1}^{[s,t]}}{\Psi(t)}$ does not depend on $t$, i.e. there exists a function $\kappa_{ij}(s)$
such that $\frac{c_{ij1}^{[s,t]}} {\Psi(t)}=\kappa_{ij}(s)$. Thus
\begin{equation}\label{ecc}
 c_{ij1}^{[s,t]}=\kappa_{ij}(s)\Psi(t), \qquad  c_{ij2}^{[s,t]}=-\kappa_{ij}(s)\Psi(t).
\end{equation}
By \eqref{cd} for \eqref{ut} we should have
\[c_{111}^{[s,t]}+c_{121}^{[s,t]}=-(c_{112}^{[s,t]}+c_{122}^{[s,t]})=-(c_{211}^{[s,t]}+c_{221}^{[s,t]})=c_{212}^{[s,t]}+c_{222}^{[s,t]}=\frac{\Psi(t)}{ 2\Psi(s)}.\]
By using \eqref{ecc}, we get from these equalities  that
\[\kappa_{11}(s)+\kappa_{12}(s)=-(\kappa_{21}(s)+\kappa_{22}(s))=\frac{1}{2\Psi(s)}.\]
Consequently the matrix \eqref{edd} has the following form:
\begin{equation}\label{ed1}
\M^{[s,t]}=\Psi(t) \begin{pmatrix}
\kappa_{11}(s) &-\kappa_{11}(s)& \vline &\kappa_{21}(s) &-\kappa_{21}(s) \\[3mm]
\frac{1}{2\Psi(s)}-\kappa_{11}(s) &\kappa_{11}(s)-\frac{1}{2\Psi(s)}& \vline &-\frac{1}{2\Psi(s)}-\kappa_{21}(s) &\kappa_{21}(s)+\frac{1}{2\Psi(s)}
\end{pmatrix},
\end{equation}
where $\Psi\ne 0$, $\kappa_{11}$ and $\kappa_{21}$ are arbitrary functions of $t\geq 0$.
 This is a $2\times2\times 2$ cubic matrix which satisfies \eqref{KC} and has three parameter functions.
Thus we proved the following:
\begin{pro} For any fixed functions $\Psi\ne 0$, $\kappa_{11}$ and $\kappa_{21}$ the matrix \eqref{ed1} generates an FA of type (D).
\end{pro}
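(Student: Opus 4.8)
The plan is to verify directly that the cubic matrix \eqref{ed1} satisfies the Kolmogorov--Chapman equation \eqref{KC} with respect to multiplication (D); by Definition~\ref{d2} this is precisely what it means for the family $\{\mathcal A^{[s,t]}\}$ with these structural constants to be an FA of type (D). Essentially all the computational content already appears in the derivation of \eqref{ed1}, so the proof amounts to reading that derivation backwards and making sure nothing was lost. First I would recall from \eqref{y} that, for $m=2$ and rule \eqref{D}, the identity to be checked is
\[
c_{ijr}^{[s,t]}=\sum_{k=1}^{2}c_{ijk}^{[s,\tau]}\,\bar c_{kr}^{[\tau,t]},\qquad \bar c_{kr}^{[\tau,t]}=\sum_{n=1}^{2}c_{knr}^{[\tau,t]},
\]
for all $0\le s<\tau<t$ and all $i,j,r$.

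From the explicit entries of \eqref{ed1} one reads off that every layer has the rigid shape $c_{ij1}^{[s,t]}=-c_{ij2}^{[s,t]}=\kappa_{ij}(s)\Psi(t)$, where $\kappa_{12}:=\tfrac1{2\Psi}-\kappa_{11}$ and $\kappa_{22}:=-\tfrac1{2\Psi}-\kappa_{21}$, so that $\kappa_{11}(s)+\kappa_{12}(s)=\tfrac1{2\Psi(s)}$ and $\kappa_{21}(s)+\kappa_{22}(s)=-\tfrac1{2\Psi(s)}$. Summing the entries of \eqref{ed1} according to \eqref{cd} I would check that the associated square matrix $\overline{\M}^{[s,t]}$ is exactly \eqref{ut} (the two row sums of each layer collapse to $\pm\Psi(t)/(2\Psi(s))$), and recall, as already noted after \eqref{ut}, that \eqref{ut} satisfies \eqref{KCD}. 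With this, the verification of the displayed identity is a one-line computation: substitute $c_{ijk}^{[s,\tau]}=(-1)^{k+1}\kappa_{ij}(s)\Psi(\tau)$ and $\bar c_{kr}^{[\tau,t]}=(-1)^{r+1}\varepsilon_k\,\Psi(t)/(2\Psi(\tau))$ with $\varepsilon_1=1,\ \varepsilon_2=-1$; the factor $\Psi(\tau)$ cancels, the sum over $k$ gives $\tfrac12\sum_{k}(-1)^{k+1}\varepsilon_k=\tfrac12(\varepsilon_1-\varepsilon_2)=1$, and what remains is $(-1)^{r+1}\kappa_{ij}(s)\Psi(t)=c_{ijr}^{[s,t]}$, as required.

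The only point that is more than bookkeeping, and which I would flag explicitly, is that Proposition~\ref{pD} supplies only the \emph{necessary} direction (every solution of \eqref{KC} for rule (D) yields, via \eqref{cd}, a solution of \eqref{KCD}), whereas here the converse is needed: knowing that $\overline{\M}^{[s,t]}$ from \eqref{ed1} satisfies \eqref{KCD} is not by itself enough, and one must genuinely plug \eqref{ed1} into the full equation \eqref{y}. The reason the converse holds in this particular family is exactly the layer structure $c_{ij1}=-c_{ij2}$, which is what makes the inner sum over $n$ (and then the sum over $k$) collapse correctly; I would present that as the heart of the argument rather than appealing to Proposition~\ref{pD}. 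Finally, since $\Psi\ne0$, $\kappa_{11}$ and $\kappa_{21}$ enter \eqref{ed1} as free parameter functions and the identity just verified holds for every admissible choice of them, the matrix \eqref{ed1} generates an FA of type (D) in each case, which completes the proof.
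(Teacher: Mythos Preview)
Your proof is correct and follows essentially the same line as the paper's argument: the paper derives the form \eqref{ed1} by solving \eqref{y} with $\overline{\M}^{[s,t]}$ equal to \eqref{ut}, and your verification simply reads that derivation in reverse, plugging \eqref{ed1} back into \eqref{y}. Your remark that Proposition~\ref{pD} gives only the necessary direction, so that one must genuinely check \eqref{y} rather than merely \eqref{KCD}, is a valid clarification that the paper leaves implicit in the reversibility of its steps.
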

\end{ex}
\begin{defn}
An FA is called periodic if its matrix $\M^{[s,t]}$
is periodic with respect to at least one of the variables  $s$, $t$,
i.e. (periodicity with respect to $t$) $\M^{[s,t+P]}=\M^{[s,t]}$ for
all values of $t$. The constant $P$ is called the period, and is
required to be nonzero.
\end{defn}
Now we shall construct a periodic FA.
\begin{ex}\label{e9}
Again consider $m=2$ and using Proposition~\ref{pD} we shall construct $\M^{[s,t]}$ corresponding to the matrix \eqref{defo}:
from \eqref{KC} we get
\begin{align*}
c_{ij1}^{[s,t]} & =\cos(t-\tau)c_{ij1}^{[s,\tau]}-\sin(t-\tau)c_{ij2}^{[s,\tau]}, \qquad i,j=1,2, \\
c_{ij2}^{[s,t]} & =\sin(t-\tau)c_{ij1}^{[s,\tau]}+\cos(t-\tau)c_{ij2}^{[s,\tau]}, \qquad  i,j=1,2.
\end{align*}

It is easy to check that this system has a solution in the form
\begin{equation}\label{cos}
c_{ij1}^{[s,t]}=A_{ij}\cos(t-s)-B_{ij}\sin(t-s), \quad c_{ij2}^{[s,t]}=B_{ij}\cos(t-s)+A_{ij}\sin(t-s),
\end{equation}
where $A_{ij}$ and $B_{ij}$ are arbitrary numbers.

 By \eqref{cd} for \eqref{defo} we should have
\begin{align*}
c_{111}^{[s,t]}+c_{121}^{[s,t]}&=\cos(t-s), \qquad  \ \  c_{112}^{[s,t]}+c_{122}^{[s,t]}=\sin(t-s),\\
c_{211}^{[s,t]}+c_{221}^{[s,t]}&=-\sin(t-s), \qquad c_{212}^{[s,t]}+c_{222}^{[s,t]}=\cos(t-s).
\end{align*}
 By using \eqref{cos},  we get from these equalities that
\begin{align*}
(A_{11}+A_{12})\cos(t-s)-(B_{11}+B_{12})\sin(t-s) & =\cos(t-s),\\
(B_{11}+B_{12})\cos(t-s)+(A_{11}+A_{12})\sin(t-s) & =\sin(t-s),\\
(A_{21}+A_{22})\cos(t-s)-(B_{21}+B_{22})\sin(t-s) & =-\sin(t-s),\\
(B_{21}+B_{22})\cos(t-s)+(A_{21}+A_{22})\sin(t-s) & =\cos(t-s).
\end{align*}
Consequently
\[ A_{12}=1-A_{11}, \quad B_{12}=-B_{11}, \quad A_{22}=-A_{21}, \quad B_{22}=1-B_{21}. \]
Now denoting $a=A_{11}$, $b=B_{11}$, $c=A_{21}$, $d=B_{21}$ and $\hat t=t-s$, we get the cubic matrix \eqref{edd} as
\begin{multline}\label{ed3}
\M^{[s,t]}=\M^{[\hat t]}=
\left(\begin{array}{cccc}
a\cos\hat t-b\sin\hat t & \ \ b\cos\hat t+a\sin\hat t\\[3mm]
(1-a)\cos\hat t+b\sin\hat t & \ \quad -b\cos\hat t+(1-a)\sin\hat t
\end{array}\right. \\
\left.\begin{array}{ccccc}
\vline& c\cos\hat t-d\sin\hat t & \ \ d\cos\hat t+c\sin\hat t \\[3mm]
\vline& -c\cos\hat t-(1-d)\sin\hat t & \ \quad (1-d)\cos\hat t-c\sin\hat t
\end{array}\right),
\end{multline}
this is a $2\times2\times 2$ cubic matrix which satisfies \eqref{KC} and has four arbitrary parameters $a,b,c,d\in \mathbb R$.
Thus we proved the following:
\begin{pro} For any fixed parameters $a,b,c,d\in \mathbb R$  the matrix \eqref{ed3} generates a homogeneous, periodic FA of type (D).
\end{pro}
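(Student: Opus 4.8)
The plan is to verify, in turn, the three assertions packed into the statement — that the cubic matrix \eqref{ed3} satisfies the Kolmogorov--Chapman equation \eqref{KC} for multiplication of type (D), that it is homogeneous, and that it is periodic — the common engine being the one‑parameter group law $R(\alpha)R(\beta)=R(\alpha+\beta)$ for the planar rotation matrices appearing in \eqref{defo}. Homogeneity is immediate, since by inspection every entry of \eqref{ed3} is a function of $\hat t=t-s$ alone, so that $\M^{[s,t]}=\M^{[\hat t]}$ depends only on $t-s$.

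For the Kolmogorov--Chapman equation I would argue as in the proof of Proposition~\ref{pD}: writing out the multiplication rule \eqref{D}, equation \eqref{KC} is exactly equation \eqref{y},
\[
c_{ijr}^{[s,t]}=\sum_{k=1}^{m}c_{ijk}^{[s,\tau]}\,\bar c_{kr}^{[\tau,t]},\qquad \bar c_{kr}^{[\tau,t]}=\sum_{n=1}^{m}c_{knr}^{[\tau,t]}.
\]
First one checks, via \eqref{cd} applied to \eqref{ed3} and using the relations $A_{12}=1-a$, $B_{12}=-b$, $A_{22}=-c$, $B_{22}=1-d$ that were built into \eqref{ed3}, that the associated square matrix $\overline{\M}^{[s,t]}$ is precisely the rotation matrix \eqref{defo} with angle $\hat t$. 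Then \eqref{y} reduces to the claim that, for each pair $(i,j)$, the row vector $\bigl(c_{ij1}^{[s,t]},c_{ij2}^{[s,t]}\bigr)$ equals $\bigl(c_{ij1}^{[s,\tau]},c_{ij2}^{[s,\tau]}\bigr)$ right-multiplied by the rotation matrix of angle $t-\tau$. But from \eqref{cos} the former row vector is $(A_{ij},B_{ij})R(\hat t)$ and the latter is $(A_{ij},B_{ij})R(\tau-s)$, and $R(\hat t)=R(\tau-s)R(t-\tau)$ by the angle-addition law; regrouping the product gives exactly the required identity, which proves \eqref{KC} for type (D).

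Periodicity then follows at once since $R(\hat t+2\pi)=R(\hat t)$, so each entry of \eqref{ed3} is $2\pi$-periodic in $\hat t$ and hence $\M^{[s,t+2\pi]}=\M^{[s,t]}$ for all admissible $s,t$; thus the FA is periodic with period $P=2\pi\neq 0$. I do not expect any genuine obstacle here; the only point needing attention is to confirm that the chain of deductions in Example~\ref{e9} — which only showed the stated constraints to be \emph{necessary} — can be read in reverse to yield an actual solution of \eqref{KC}. This reversal is legitimate precisely because of the group property $R(\alpha)R(\beta)=R(\alpha+\beta)$, which is the single structural fact the whole argument rests on.
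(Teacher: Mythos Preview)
Your proof is correct and follows essentially the same route as the paper: the paper embeds the proposition inside Example~\ref{e9}, where it derives the form \eqref{ed3} from the rotation matrix \eqref{defo} via Proposition~\ref{pD} and the ansatz \eqref{cos}, and then simply declares ``Thus we proved the following''. You run the same argument in the verification direction --- computing $\overline{\M}^{[s,t]}$ from \eqref{ed3}, recovering \eqref{defo}, and invoking the angle-addition law for rotations to close \eqref{y} --- which is exactly the reversal the paper's construction implicitly relies on; homogeneity and periodicity are handled identically.
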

\end{ex}
{\bf A $m$-dimensional time non-homogenous FA of type (D).}
For arbitrary $m$ we shall give an example of time
non-homogenous FA of type (D),  i.e. we shall proof the following:

\begin{thm}\label{tA}
 Let $\{A^{[t]}=(a_{ij}^{[t]}), \,t\geq 0\}$ be a family of
invertible (for all $t$), $m\times m$ square matrices\footnote{Construction of a family of invertible $m\times m$ matrices $A^{[t]}$
is not difficult, for example, one can take a lower or an upper triangular matrix with non-zero entries in the diagonal.
 Thus this theorem describes a very rich class of FAs.} and let $(A^{[t]})^{-1}=(b_{ij}^{[t]})$ denote the inverse of $A^{[t]}$.
Assume that $\beta_{ijk}^{(s)}$, $i,j,k=1, \dots,m$, are arbitrary functions such that
\[\sum_{j=1}^m\beta_{ijk}^{(s)}=a_{ik}^{[s]}, \qquad \text{for any} \ \ i,k \ \ \text{and} \ \ s.\]
Then cubic matrix
\[\M^{[s,t]}=\left(\sum_{k=1}^m\beta_{ijk}^{(s)}b_{kr}^{[t]}\right)_{i,j,r=1}^m\]
generates an FA of type (D).
\end{thm}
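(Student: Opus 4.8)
The plan is to verify directly that the cubic matrix $\M^{[s,t]}$ defined in the statement satisfies the Kolmogorov--Chapman equation \eqref{KC} with respect to multiplication (D). By Proposition~\ref{pD}, it suffices to check two things: first, that the associated square matrix $\overline{\M}^{[s,t]}$ obtained from \eqref{cd} satisfies \eqref{KCD}; and second, that $\M^{[s,t]}$ is genuinely reconstructed from $\overline{\M}^{[s,t]}$ via \eqref{y}, so that Proposition~\ref{pD} applies in the direction we need. So the first step is to compute $\overline{\M}^{[s,t]}$: summing the defining formula over $j$,
\[
\bar c^{[s,t]}_{ir}=\sum_{j=1}^m\sum_{k=1}^m\beta_{ijk}^{(s)}b_{kr}^{[t]}
=\sum_{k=1}^m\left(\sum_{j=1}^m\beta_{ijk}^{(s)}\right)b_{kr}^{[t]}
=\sum_{k=1}^m a_{ik}^{[s]}b_{kr}^{[t]},
\]
using the hypothesis $\sum_j\beta_{ijk}^{(s)}=a_{ik}^{[s]}$. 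Thus $\overline{\M}^{[s,t]}=A^{[s]}\,(A^{[t]})^{-1}$ as a product of square matrices.

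The second step is the verification of \eqref{KCD}, which is now immediate:
\[
\overline{\M}^{[s,\tau]}\,\overline{\M}^{[\tau,t]}
=A^{[s]}(A^{[\tau]})^{-1}A^{[\tau]}(A^{[t]})^{-1}
=A^{[s]}(A^{[t]})^{-1}=\overline{\M}^{[s,t]},
\]
where the cancellation $(A^{[\tau]})^{-1}A^{[\tau]}=I$ is exactly where the invertibility hypothesis on the family $\{A^{[t]}\}$ is used. This already shows $\overline{\M}^{[s,t]}$ solves the square-matrix equation.

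The third step is to confirm that the cubic matrix $\M^{[s,t]}$ itself satisfies \eqref{KC} for multiplication (D), i.e.\ the full identity \eqref{y}:
\[
c_{ijr}^{[s,t]}\stackrel{?}{=}\sum_{k=1}^m c^{[s,\tau]}_{ijk}\,\bar c^{[\tau,t]}_{kr}.
\]
Here the left side is $\sum_{k}\beta_{ijk}^{(s)}b_{kr}^{[t]}$ by definition, while the right side is $\sum_{k}\big(\sum_{\ell}\beta_{ij\ell}^{(s)}b_{\ell k}^{[\tau]}\big)\bar c^{[\tau,t]}_{kr}=\sum_{\ell}\beta_{ij\ell}^{(s)}\big(\sum_k b_{\ell k}^{[\tau]}\bar c^{[\tau,t]}_{kr}\big)$. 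Using Step~2's formula $\bar c^{[\tau,t]}_{kr}=\sum_m a_{km}^{[\tau]}b_{mr}^{[t]}$, the inner sum is $\sum_{k,m}b_{\ell k}^{[\tau]}a_{km}^{[\tau]}b_{mr}^{[t]}=\sum_m\delta_{\ell m}b_{mr}^{[t]}=b_{\ell r}^{[t]}$, again invoking $(A^{[\tau]})^{-1}A^{[\tau]}=I$. Hence the right side equals $\sum_\ell\beta_{ij\ell}^{(s)}b_{\ell r}^{[t]}=c_{ijr}^{[s,t]}$, as required. This establishes \eqref{KC} for multiplication (D), so $\{\mathcal A^{[s,t]}\}$ is an FA of type (D).

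The only genuine subtlety — the step I would flag as the place where care is needed — is the logical direction of Proposition~\ref{pD}: that proposition is phrased as "any solution of \eqref{KC} \emph{can be given by} a solution of \eqref{cd}," whereas here we want the converse construction, producing a cubic solution from a prescribed square one together with a choice of the $\beta_{ijk}^{(s)}$. Rather than lean on Proposition~\ref{pD} as a black box, the cleanest route (and the one sketched in Step~3 above) is to verify \eqref{y} directly by the telescoping computation; the hypothesis on the $\beta$'s and the invertibility of $A^{[\tau]}$ are precisely what make the middle factors collapse. Non-homogeneity is clear since $\M^{[s,t]}$ depends on $s$ through $\beta^{(s)}$ and on $t$ through $(A^{[t]})^{-1}$ separately, not merely through $t-s$; and the footnote's triangular-matrix construction shows the hypotheses are easily met, so the class of FAs produced is genuinely large.
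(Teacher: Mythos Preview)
Your proof is correct and uses the same core ingredients as the paper's: the identification $\overline{\M}^{[s,t]}=A^{[s]}(A^{[t]})^{-1}$ and the telescoping $(A^{[\tau]})^{-1}A^{[\tau]}=I$. The only difference is presentational --- the paper argues as a derivation (writing the equation as $\alpha_{ij}^{[s,t]}A^{[t]}=\alpha_{ij}^{[s,\tau]}A^{[\tau]}$ and concluding this vector is independent of $t$, hence equal to some $\beta_{ij}^{(s)}$), whereas you verify \eqref{KC} directly; your remark that Proposition~\ref{pD} is stated only in the forward direction, so a direct check of \eqref{y} is the cleanest route, is well taken.
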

\begin{proof}
 Define the following
matrix:
\[ \overline\M^{[s,t]}=A^{[s]}(A^{[t]})^{-1}.\]
It is known (see \cite{CLR}) that this matrix satisfies \eqref{KCD}.
We shall use it to construct a cubic matrix which satisfies \eqref{KC}.
For each $i,j$ define an $m$-dimensional vector
\[\alpha_{ij}^{[s,t]}=\Big(c_{ij1}^{[s,t]},\dots, c_{ijm}^{[s,t]}\Big).\]
By multiplication rule (D) we have from \eqref{KC} that
\[\alpha_{ij}^{[s,t]}=\alpha_{ij}^{[s,\tau]}A^{[\tau]}(A^{[t]})^{-1}.\]
This can be rewritten as
\[\alpha_{ij}^{[s,t]}A^{[t]}=\alpha_{ij}^{[s,\tau]}A^{[\tau]}.\]
It follows from the last equality  that the vector $\alpha_{ij}^{[s,t]}A^{[t]}$
should not depend on $t$, i.e. there is a vector $\beta_{ij}^{(s)}=(\beta_{ijk}^{(s)})_{k=1}^m$ such that $\alpha_{ij}^{[s,t]}A^{[t]}=\beta_{ij}^{(s)}$.
Then
\[\alpha_{ij}^{[s,t]}=\beta_{ij}^{(s)}(A^{[t]})^{-1},\]
i.e.
\[c_{ijr}^{[s,t]}=\sum_{k=1}^m\beta_{ijk}^{(s)}b_{kr}^{[t]}.\]
By \eqref{cd} we should have
\[\sum_{j=1}^mc_{ijr}^{[s,t]}=\sum_{k=1}^m\sum_{j=1}^m\beta_{ijk}^{(s)}b_{kr}^{[t]}=(A^{[s]}(A^{[t]})^{-1})_{ir}=\sum_{k=1}^m a_{ik}^{[s]}b_{kr}^{[t]},\]
i.e.,
\[\sum_{k=1}^mb_{kr}^{[t]}\Big(\sum_{j=1}^m\beta_{ijk}^{(s)}- a_{ik}^{[s]}\Big)=0,\]
the last equality follows from the condition of theorem.
\end{proof}

\emph{Case (E).} Comparing (D) and (E) one can see that
these multiplications are very similar: in case (D) the right hand side of the equation \eqref{KC} has sum
over two indexes of the matrix $\mathcal M^{[\tau,t]}$, but in the case (E) such sum is with respect to matrix $\mathcal M^{[s,\tau]}$.
With respect to the time these multiplications interchange the role of $s$ and $t$, i.e. multiplication (D) gives a forward flow, but multiplication (E) corresponds to a backward flow.
Therefore one can show that Proposition~\ref{pD} is true in the case (E)  too.
Here we shall give an example of an FA of type (E),
which corresponds to the matrix \eqref{ut}.
 \begin{ex}\label{e10}
We shall construct a cubic matrix $\M^{[s,t]}$ for $m=2$ which satisfies \eqref{KC} in the case of the multiplication (E).
Find such a matrix $\M^{[s,t]}$ corresponding to  the matrix \eqref{ut}:
from \eqref{KC} we get\footnote{Compare with case (D) to see the interchange of $s$ and $t$.}
\begin{align*}
c_{1ij}^{[s,t]}& = \ \ \frac{\Psi(\tau)}{ 2\Psi(s)}c_{1ij}^{[\tau,t]}-\frac{\Psi(\tau)}{2\Psi(s)}c_{2ij}^{[\tau,t]}, \qquad   i,j=1,2, \\
c_{2ij}^{[s,t]} & =-\frac{\Psi(\tau)}{ 2\Psi(s)}c_{1ij}^{[\tau,t]}+\frac{\Psi(\tau)}{2\Psi(s)}c_{2ij}^{[\tau,t]}, \qquad i,j=1,2.
\end{align*}
Consequently $c_{1ij}^{[s,t]}=-c_{2ij}^{[s,t]}$. Therefore
\[c_{1ij}^{[s,t]}=\frac{\Psi(\tau)}{\Psi(s)}c_{1ij}^{[\tau,t]} \Leftrightarrow \Psi(s)c_{1ij}^{[s,t]}=\Psi(\tau)c_{1ij}^{[\tau,t]}.\]
Hence $\Psi(s)c_{1ij}^{[s,t]}$ does not depend on $s$, i.e. there exists a function $\gamma_{ij}(t)$ such that $\Psi(s)c_{1ij}^{[s,t]}=\gamma_{ij}(t)$. Thus
\begin{equation}\label{ece}
 c_{1ij}^{[s,t]}=\frac{\gamma_{ij}(t)}{ \Psi(s)}, \qquad  c_{2ij}^{[s,t]}=-\frac{\gamma_{ij}(t)}{ \Psi(s)}.
\end{equation}
We should have
\[c_{111}^{[s,t]}+c_{121}^{[s,t]}=-(c_{112}^{[s,t]}+c_{122}^{[s,t]})=-(c_{211}^{[s,t]}+c_{221}^{[s,t]})=c_{212}^{[s,t]}+c_{222}^{[s,t]}=\frac{\Psi(t)}{ 2\Psi(s)}.\]
By using \eqref{ece}, we get from this equalities that
\[\gamma_{11}(t)+\gamma_{21}(t)=-(\gamma_{12}(t)+\gamma_{22}(t))=\frac{\Psi(t)}{2}.\]
Now the matrix \eqref{edd} has the following form:
\begin{equation}\label{ed2}
\M^{[s,t]}=\frac{1}{\Psi(s)}
\begin{pmatrix}
\gamma_{11}(t) &\gamma_{12}(t)& \vline &-\gamma_{11}(t) &-\gamma_{12}(t) \\[3mm]
\frac{\Psi(t)}{2}-\gamma_{11}(t) &-\gamma_{12}(t)-\frac{\Psi(t)}{2}& \vline &-\frac{\Psi(t)}{2}+\gamma_{11}(t) &\gamma_{12}(t)+\frac{\Psi(t)}{2}
\end{pmatrix},
\end{equation}
where $\Psi\ne 0$, $\gamma_{11}$ and $\gamma_{12}$ are arbitrary functions.
 This is a $2\times2\times 2$ cubic matrix which satisfies \eqref{KC} and has three parameter functions.
Thus we proved the following:
\begin{pro} For any fixed $\Psi\ne 0$, $\gamma_{11}$ and $\gamma_{12}$ the matrix \eqref{ed2} generates an FA of type (E).
\end{pro}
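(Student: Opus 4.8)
The plan is to verify by direct substitution that the cubic matrix $\M^{[s,t]}$ displayed in \eqref{ed2} satisfies the Kolmogorov--Chapman equation \eqref{KC} with respect to multiplication of type (E); since \eqref{ed2} was obtained by running a chain of implications forward, the real task is to confirm that those implications are reversible, and the cleanest way to do that is to plug \eqref{ed2} back into \eqref{KC}.

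First I would record the form that \eqref{KC} takes for multiplication (E): using $c_{inr}=\sum_{j,k}a_{ijk}b_{knr}$ one gets
\[
c_{inr}^{[s,t]}=\sum_{j,k=1}^{2}c_{ijk}^{[s,\tau]}c_{knr}^{[\tau,t]}=\sum_{k=1}^{2}\Big(\sum_{j=1}^{2}c_{ijk}^{[s,\tau]}\Big)c_{knr}^{[\tau,t]}=\sum_{k=1}^{2}\bar c_{ik}^{[s,\tau]}\,c_{knr}^{[\tau,t]},
\]
with $\bar c_{ik}$ as in \eqref{cd}; this is the (E)-analogue of \eqref{y}, and summing it over $n$ yields \eqref{KCD}, i.e.\ the (E)-version of Proposition~\ref{pD} that was announced in the text. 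Next I would read off from \eqref{ed2} the two ``sign classes'' of entries: for all $i,j$ one has $c_{1ij}^{[s,t]}=\gamma_{ij}(t)/\Psi(s)$ and $c_{2ij}^{[s,t]}=-\gamma_{ij}(t)/\Psi(s)$, where $\gamma_{11},\gamma_{12}$ are the prescribed functions and $\gamma_{21}=\frac12\Psi-\gamma_{11}$, $\gamma_{22}=-\frac12\Psi-\gamma_{12}$; the arithmetic facts that will be used are the column sums $\gamma_{11}+\gamma_{21}=\frac12\Psi$ and $\gamma_{12}+\gamma_{22}=-\frac12\Psi$.

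The verification is then a short computation. For the inner sum,
\[
\sum_{j=1}^{2}c_{ijk}^{[s,\tau]}=\frac{(-1)^{i+1}}{\Psi(s)}\sum_{j=1}^{2}\gamma_{jk}(\tau)=(-1)^{i+1}(-1)^{k+1}\frac{\Psi(\tau)}{2\Psi(s)},
\]
which is exactly the $(i,k)$ entry $\bar c_{ik}^{[s,\tau]}$ of the matrix \eqref{ut}; plugging this together with $c_{knr}^{[\tau,t]}=(-1)^{k+1}\gamma_{nr}(t)/\Psi(\tau)$ into the displayed (E)-form of \eqref{KC}, the signs satisfy $(-1)^{k+1}(-1)^{k+1}=1$, the factor $\Psi(\tau)$ cancels, and summing over $k=1,2$ gives $\sum_k\bar c_{ik}^{[s,\tau]}c_{knr}^{[\tau,t]}=(-1)^{i+1}\gamma_{nr}(t)/\Psi(s)=c_{inr}^{[s,t]}$. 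Thus \eqref{KC} holds; one also checks en passant that $\overline{\M}^{[s,t]}=(\bar c_{ik}^{[s,t]})$ obtained from \eqref{ed2} is precisely \eqref{ut}, which satisfies \eqref{KCD}, so the construction is internally consistent and \eqref{ed2} indeed defines an FA of type (E).

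There is no analytic subtlety here; the only point requiring care is the index bookkeeping, specifically that the constraints $\gamma_{21}=\frac12\Psi-\gamma_{11}$, $\gamma_{22}=-\frac12\Psi-\gamma_{12}$ built into \eqref{ed2} are exactly what force $\sum_j c_{ijk}^{[s,\tau]}$ to equal the $(i,k)$ entry of \eqref{ut}. Once that is noticed, the homogeneity in $\Psi$ (the $\Psi(\tau)$ produced by the inner sum being killed by the $1/\Psi(\tau)$ in $c_{knr}^{[\tau,t]}$) makes the Chapman--Kolmogorov identity drop out, and the freedom of $\Psi\ne 0$, $\gamma_{11}$, $\gamma_{12}$ is inherited directly, since none of these cancellations used anything about them.
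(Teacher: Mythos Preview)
Your proposal is correct. The computation is clean and the bookkeeping is accurate: the sign pattern $c_{ijk}=(-1)^{i+1}\gamma_{jk}(t)/\Psi(s)$ together with the column sums $\gamma_{1k}+\gamma_{2k}=(-1)^{k+1}\Psi/2$ are exactly what is needed, and the cancellation of $\Psi(\tau)$ goes through as you say.

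The paper's argument and yours are two sides of the same computation. In Example~\ref{e10} the paper starts from the Kolmogorov--Chapman equation for multiplication (E) with $\overline{\M}^{[s,t]}$ given by \eqref{ut}, and derives that any solution must take the form \eqref{ece} with the constraints on $\gamma_{ij}$, arriving at \eqref{ed2}; it then states the proposition with ``Thus we proved the following.'' Your approach runs the implication in the opposite direction, plugging the completed form \eqref{ed2} back into \eqref{KC} and checking equality directly. The underlying observations are identical (same sign structure, same column sums, same relation to \eqref{ut}), so this is not a genuinely different route; what your write-up adds is an explicit sufficiency check, which is arguably the logically correct direction for proving the proposition as stated, since the paper's derivation establishes necessity rather than sufficiency.
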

\end{ex}

The following theorem is an analogue of Theorem~\ref{tA} for an FA of type (E):
\begin{thm}\label{tE}
 Let $\{A^{[t]}=(a_{ij}^{[t]}), \,t\geq 0\}$ be a family of
invertible (for all $t$), $n\times n$ square matrices and let $(A^{[t]})^{-1}=(b_{ij}^{[t]})$ denote the inverse of $A^{[t]}$.
Assume that $\gamma_{ijk}^{(t)}$, $i,j,k=1, \dots,m$,  are arbitrary functions such that
\[\sum_{j=1}^m\gamma_{ijk}^{(t)}=b_{ik}^{[t]}, \qquad \text{for any} \ \ i,k \ \ \text{and} \ \ t.\]
Then the cubic matrix
\[\M^{[s,t]}=\left(\sum_{k=1}^ma_{ik}^{[s]}\gamma_{kjr}^{(t)}\right)_{i,j,r=1}^m\]
generates an FA of type (E).
\end{thm}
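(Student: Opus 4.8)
The plan is to follow the proof of Theorem~\ref{tA} essentially verbatim, using the remark preceding the present statement that multiplication (E) is multiplication (D) with the roles of $s$ and $t$ interchanged; accordingly every ``does not depend on $t$'' step in that proof becomes a ``does not depend on $s$'' step here. Concretely, I would first record the auxiliary square matrix $\overline{\M}^{[s,t]}=A^{[s]}(A^{[t]})^{-1}$ and note that it satisfies \eqref{KCD}, since $A^{[s]}(A^{[\tau]})^{-1}\cdot A^{[\tau]}(A^{[t]})^{-1}=A^{[s]}(A^{[t]})^{-1}$ (this is the fact borrowed from \cite{CLR} in the proof of Theorem~\ref{tA}). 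I would also invoke the Case~(E) analogue of Proposition~\ref{pD}: summing the (E)-form of \eqref{KC} over the middle index $j$ and setting $\bar c^{[s,t]}_{ik}=\sum_{j}c^{[s,t]}_{ijk}$ as in \eqref{cd} turns \eqref{KC} into \eqref{KCD} for $\overline{\M}^{[s,t]}=(\bar c^{[s,t]}_{ik})$. Hence it is enough to exhibit a cubic matrix that satisfies \eqref{KC} for multiplication (E) and whose middle-index layer sums equal $A^{[s]}(A^{[t]})^{-1}$.

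Next I would write \eqref{KC} out under multiplication rule (E): $c^{[s,t]}_{inr}=\sum_{j,k}c^{[s,\tau]}_{ijk}c^{[\tau,t]}_{knr}=\sum_{k}\bar c^{[s,\tau]}_{ik}c^{[\tau,t]}_{knr}$, and impose $\overline{\M}^{[s,\tau]}=A^{[s]}(A^{[\tau]})^{-1}$. For each fixed pair $(n,r)$, collecting the numbers $c^{[s,t]}_{inr}$, $i=1,\dots,m$, into a column vector $\alpha^{[s,t]}_{nr}=(c^{[s,t]}_{1nr},\dots,c^{[s,t]}_{mnr})^{\top}$, this becomes $\alpha^{[s,t]}_{nr}=A^{[s]}(A^{[\tau]})^{-1}\alpha^{[\tau,t]}_{nr}$, equivalently $(A^{[s]})^{-1}\alpha^{[s,t]}_{nr}=(A^{[\tau]})^{-1}\alpha^{[\tau,t]}_{nr}$. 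Thus $(A^{[s]})^{-1}\alpha^{[s,t]}_{nr}$ does not depend on $s$, so there is a vector $\gamma^{(t)}_{nr}=(\gamma^{(t)}_{knr})_{k=1}^{m}$ with $\alpha^{[s,t]}_{nr}=A^{[s]}\gamma^{(t)}_{nr}$, that is, $c^{[s,t]}_{inr}=\sum_{k}a^{[s]}_{ik}\gamma^{(t)}_{knr}$ — which is exactly the asserted form of $\M^{[s,t]}$ (after the harmless relabelling $n\mapsto j$).

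Finally I would close the loop via \eqref{cd}: the middle-index layer sum must reproduce $\overline{\M}^{[s,t]}=A^{[s]}(A^{[t]})^{-1}$, i.e. $\sum_{j}c^{[s,t]}_{ijr}=\sum_{k}a^{[s]}_{ik}b^{[t]}_{kr}$; substituting the formula just obtained yields $\sum_{k}a^{[s]}_{ik}\big(\sum_{j}\gamma^{(t)}_{kjr}-b^{[t]}_{kr}\big)=0$ for every $i$, and invertibility of $A^{[s]}$ forces $\sum_{j}\gamma^{(t)}_{kjr}=b^{[t]}_{kr}$, which is precisely the hypothesis imposed on the $\gamma^{(t)}_{ijk}$. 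Conversely, that any $\gamma^{(t)}_{ijk}$ obeying this hypothesis really gives a solution of \eqref{KC} for (E) is a one-line check: plugging $c^{[s,t]}_{ijr}=\sum_{k}a^{[s]}_{ik}\gamma^{(t)}_{kjr}$ into the right-hand side of the (E)-equation, the inner double sum over $j$ and $k$ collapses because $\sum_{j}\gamma^{(\tau)}_{ljk}=b^{[\tau]}_{lk}$ and $B^{[\tau]}A^{[\tau]}=I$, leaving $\sum_{l}a^{[s]}_{il}\gamma^{(t)}_{lnr}=c^{[s,t]}_{inr}$. I do not anticipate a genuine obstacle: everything is parallel to Theorem~\ref{tA}, and the only point needing attention is correctly locating the ``independent of $s$'' step (rather than ``independent of $t$''), which is dictated by (E) being a backward flow, together with the routine index bookkeeping.
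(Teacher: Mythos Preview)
Your proposal is correct and follows essentially the same route as the paper: the paper's proof likewise replaces the row vectors $\alpha_{ij}^{[s,t]}$ of Theorem~\ref{tA} by the column vectors $v_{jr}^{[s,t]}=(c_{1jr}^{[s,t]},\dots,c_{mjr}^{[s,t]})^{T}$, derives $v_{jr}^{[s,t]}=A^{[s]}(A^{[\tau]})^{-1}v_{jr}^{[\tau,t]}$ from the (E)-form of \eqref{KC}, and then appeals to the argument of Theorem~\ref{tA}. Your write-up is in fact more complete than the paper's, since you spell out both the ``independent of $s$'' step and the converse verification that the resulting cubic matrix genuinely satisfies \eqref{KC}.
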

\begin{proof}
 The proof is similar to the proof of Theorem~\ref{tA}. Here instead of $\alpha_{ij}^{[s,t]}$ one considers
\[v_{jr}^{[s,t]}=\left(c_{1jr}^{[s,t]},\dots, c_{mjr}^{[s,t]}\right)^T.\]
By the multiplication rule (E) we have from \eqref{KC} that
\[v_{jr}^{[s,t]}=A^{[s]}(A^{[\tau]})^{-1}v^{[\tau,t]}_{jr}.\]
Now repeating similar calculations in the proof of Theorem~\ref{tA}, we obtain the result.
\end{proof}

\section{Time dependent dynamics of algebraic properties in an FA}\label{S:dynamics}

By our definitions, an FA is a continuous-time dynamical
system which in a fixed pair of time $(s,t)$ is an algebra.
Therefore properties of the algebras in an FA depend on time.

\begin{defn}
Assume an FA, $\mathcal A^{[s,t]}$, has a property, say $P$,
at a pair of times $(s_0,t_0)$; we say that the FA has $P$ property
transition if there is a pair $(s,t)\ne (s_0,t_0)$ at which the FA
has no the property $P$.
\end{defn}

Denote
\begin{align*}
\mathcal T & =\{(s,t): 0\leq s\leq t\};\\
 \mathcal T_P & =\{(s,t)\in \mathcal T: \mathcal A^{[s,t]} \ \ \text{has property} \  P \};\\
\mathcal T_P^0 & = \mathcal T\setminus \mathcal T_P=\{(s,t)\in \mathcal T: \mathcal A^{[s,t]} \ \ \text{has not property} \ P \}.
\end{align*}

\begin{defn}
We call the set:
\begin{itemize}
\item[-] $\mathcal T_P$: the duration of the property $P$;

\item[-] $\mathcal T_P^0$: the lost duration of the property $P$;

\item[-] The partition $\{\mathcal T_P, \mathcal T^0_P\}$ of the set
$\mathcal T$ is called $P$ property diagram.
\end{itemize}
\end{defn}

\subsection{Dynamics of a baric property.}
The following theorem gives dynamical baric properties:

\begin{thm}\hfill
 \begin{itemize}
\item[(i)] Let $\{\mathcal A^{[s,t]}, \ \ 0\leq s\leq t\}$ be a SFA.
 Then the algebra $\mathcal A^{[s,t]}$ is baric for any $(s,t)\in \mathcal T$.

 \item[(ii)] An algebra from an FA of type (C) may or may not be a baric algebra.

 \item[(iii)] Let $\{\mathcal A^{[s,t]}, \ \ 0\leq s<t\}$ be a flow of $m$-dimensional algebras of type (D) or (E). Then $\mathcal A^{[s,t]}$ is not baric if $m\geq 2$, i.e. $\mathcal T_{baric}=\emptyset$ if $m\geq 2$.
 \end{itemize}
 \end{thm}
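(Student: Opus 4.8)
The plan is to handle the three claims separately, each by a direct computation using the structure of the matrix of structural constants and Proposition~\ref{pb} as the criterion for baricity (a finite-dimensional commutative algebra is baric iff it admits a basis in which $\sum_k c_{ijk}=1$ for all $i,j$; more generally, by the definition, baricity means there is a nonzero multiplicative linear form).

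For (i): if $\{\mathcal A^{[s,t]}\}$ is a SFA, then by Definition~\ref{d2} the matrices $\mathcal M^{[s,t]}$ satisfy condition (ii) of Definition~\ref{d1}, i.e. $\sum_{k}c_{ijk}^{[s,t]}=1$ for all $i,j$, and also the symmetry condition (i), so $\mathcal A^{[s,t]}$ is commutative. Hence the hypotheses of Proposition~\ref{pb} are met verbatim, and $\chi(x)=\sum_i x_i$ is a character; thus $\mathcal A^{[s,t]}$ is baric for every $(s,t)\in\mathcal T$. This is essentially immediate and requires no real obstacle.

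For (ii): I would exhibit two explicit FAs of type (C), one baric and one not. For the non-baric example, take the matrix \eqref{defo} extended to a cubic matrix (via Example~\ref{e7}): for suitable choices of the free solution pair, the resulting algebra is non-commutative and/or fails the character condition, because $\sum_k c_{i1k}^{[s,t]}=\cos(t-s)+\sin(t-s)\neq 1$ in general, and one must additionally check no other character exists — easiest by choosing parameters making the algebra have zero or degenerate multiplicative structure on a subspace. For the baric example, one can simply take a constant (in $(s,t)$) stochastic cubic matrix that is also layer-wise compatible with type (C), e.g. one coming from the Example~\ref{e6} construction restricted appropriately, so that $\sum_k c_{ijk}=1$ holds and Proposition~\ref{pb} applies. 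The point to verify carefully is that these genuinely satisfy \eqref{KC} for multiplication (C); by the Proposition on Case~(C) it suffices to check each layer independently, which reduces to an ordinary matrix Kolmogorov-Chapman identity that is already established for the square-matrix examples cited.

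For (iii), which I expect to be the main obstacle: here the claim is that \emph{no} algebra in a type (D) or type (E) flow is baric once $m\geq 2$, so one cannot just produce an example — one needs a structural argument. The plan is to use Proposition~\ref{pD} (valid for (E) as well by the symmetry remark): any cubic solution $\mathcal M^{[s,t]}$ of \eqref{KC} for type (D) has reduced square matrix $\overline{\mathcal M}^{[s,t]}=(\bar c_{ik}^{[s,t]})$ with $\bar c_{ik}^{[s,t]}=\sum_j c_{ijk}^{[s,t]}$ satisfying \eqref{KCD}, i.e. $\overline{\mathcal M}^{[s,t]}=\overline{\mathcal M}^{[s,\tau]}\,\overline{\mathcal M}^{[\tau,t]}$. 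For a fixed $(s,t)$ with $s<t$, baricity (via Proposition~\ref{pb}, noting one first needs commutativity — which for type (D)/(E) with $m\geq2$ is itself typically violated, giving another route) would force, in the relevant basis, $\sum_k c_{ijk}^{[s,t]}=1$ for all $i,j$, hence summing appropriately that every row of $\overline{\mathcal M}^{[s,t]}$ sums to $m$ (since $\sum_k\bar c_{ik}^{[s,t]}=\sum_{j,k}c_{ijk}^{[s,t]}=\sum_j 1=m$). But plugging $s<\tau<t$ into \eqref{KCD} and comparing row sums forces $m=m\cdot m$, i.e. $m=1$, a contradiction when $m\geq 2$. I would state this row-sum argument carefully — taking $\tau$ strictly between $s$ and $t$ is where the hypothesis $s<t$ (rather than $s\le t$) is used — and then remark that the analogous argument with column sums handles type (E). Thus $\mathcal T_{\mathrm{baric}}=\emptyset$ for $m\geq 2$. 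The delicate point is making the reduction from "baric" to "row sums equal $1$ in a suitable basis" rigorous: baricity only guarantees \emph{some} basis, so one should argue that a change of basis rescaling the character still yields a reduced square matrix obeying \eqref{KCD} with constant row sums, and run the same $m=m^2$ contradiction; alternatively, argue directly from the existence of a character $\chi$ that $\chi$ induces a multiplicative linear functional on the square-matrix system incompatible with \eqref{KCD} unless $m=1$.
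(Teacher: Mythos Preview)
Your approach to (i) is exactly the paper's: immediate from stochasticity and Proposition~\ref{pb}. For (ii) the paper merely records criterion~\eqref{bc} and notes it may or may not hold; your plan to exhibit explicit baric and non-baric examples is more careful than what the paper actually does.

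For (iii) the paper's argument is more direct than yours. It does not pass to the reduced square matrix $\overline{\mathcal M}^{[s,t]}$ at all; it simply writes the type-(D) Kolmogorov--Chapman relation $c_{ijr}^{[s,t]}=\sum_k c_{ijk}^{[s,\tau]}\sum_n c_{knr}^{[\tau,t]}$, sums both sides over $r$, and uses $\sum_r c_{knr}^{[\tau,t]}=1$ together with $\sum_k c_{ijk}^{[s,\tau]}=1$ to obtain $1=m$. Your $m=m^2$ via row sums of $\overline{\mathcal M}$ is the same contradiction after one unnecessary reduction step.

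There is, however, a genuine gap in your version of (iii). You fix a \emph{single} pair $(s,t)$, deduce that the rows of $\overline{\mathcal M}^{[s,t]}$ sum to $m$, and then ``compare row sums'' across \eqref{KCD} to obtain $m=m^2$. But that comparison requires the row sums of $\overline{\mathcal M}^{[s,\tau]}$ and $\overline{\mathcal M}^{[\tau,t]}$ to equal $m$ as well, i.e.\ baricity at the intermediate pairs $(s,\tau)$ and $(\tau,t)$, which you have not assumed. The paper sidesteps this by explicitly assuming $\mathcal A^{[s,t]}$ baric \emph{for all} $(s,t)\in\mathcal T$ before running the computation; strictly speaking that only shows the flow cannot be baric at every pair, which is weaker than the stated $\mathcal T_{\mathrm{baric}}=\emptyset$, but that is the argument the paper gives. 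Your basis-change concern is legitimate but is not addressed in the paper either.
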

\begin{proof} We use Proposition~\ref{pb}:

(i) By definition of a SFA the corresponding matrix $\M^{[s,t]}$ is stochastic, i.e. the condition of Proposition~\ref{pb} is satisfied.

(ii) In the case of multiplication (C), if the matrix $\mathcal M^{[s,t]}=(c_{ijk}^{[s,t]})$ satisfies
conditions
\begin{equation}\label{bc}
c_{ijk}^{[s,t]}=c_{jik}^{[s,t]}, \qquad \sum_{k=1}^mc_{ijk}^{[s,t]}=1, \quad  \text{for some} \ \ (s,t),
\end{equation}
then the corresponding $\mathcal A^{[s,t]}$ is baric, otherwise it is not baric.

(iii) We should check \eqref{bc} for the multiplication (D) (the case (E) is similar).
Assume $\mathcal A^{[s,t]}$ is baric for any $(s,t)\in \mathcal T$.  Then from \eqref{KC} we have
\[c_{ijr}^{[s,t]}=\sum_{k=1}^mc_{ijk}^{[s,\tau]}\sum_{n=1}^mc_{knr}^{[\tau,t]},\]
and, by using \eqref{bc}
for any $(s,t)$ we get
\[1=\sum_{r=1}^mc_{ijr}^{[s,t]}=\sum_{k=1}^mc_{ijk}^{[s,\tau]}\sum_{n=1}^m\sum_{r=1}^mc_{knr}^{[\tau,t]}=
m.\]
This is a contradiction to the assumption that $m\geq 2$.
\end{proof}

\begin{rk}
Since the matrix of an FA is not stochastic in general, using  the above mentioned examples of FAs
one can give several baric property diagrams.
  For example, in case (C) one can take 
 matrices mentioned in Example \ref{e7} which satisfy condition (\ref{bc}). 
  But in cases (D) and (E) baric FAs arise only for $m=1$.
\end{rk}

\subsection{Limit algebras of flows} As it was mentioned above
each flow of algebras can be considered as a continuous time dynamical system such that 
in each fixed pair $(s,t)$ of time
its state is a finite-dimensional algebra.
The main problem in such dynamical systems is to know what
will be the limit algebra (if it exists):
\[\lim_{t-s\to+\infty}\mathcal A^{[s,t]}=\mathcal A.\]
In case the limit does not exist then one asks about possible limit algebras
(see \cite[Proposition 2.7 ]{CLR} for an interesting example).

\emph{Case Example~\ref{e2}.} Simple calculations show that
the limit algebra (depending on parameter $\epsilon$ of the example) is $\mathcal A_\epsilon=\langle e_1, e_2\rangle$  with multiplication table:
\[
\begin{cases}
e_1^2=e_1e_2=e_2e_1=e_2^2=e_2, & \text{if} \ \ \epsilon\in (0,\frac{1}{2}],\\
e_1^2=e_1e_2=e_2e_1=e_2^2=xe_1+(1-x)e_2, & \text{if} \ \ \epsilon=0.
\end{cases}
\]

\emph{Case Example~\ref{e3}.} In this case the limit algebra (independently on parameter $\epsilon$ of the example) is
$\mathcal A=\langle e_1, e_2\rangle$  with multiplication table:
\[ e_1^2=e_1e_2=e_2e_1=e_2^2=e_2.\]

\emph{Case Example~\ref{e4}.} The limit algebra (depending on parameter $\epsilon$ and initial point $(x_1,x_2,1-x_1-x_2)$ of the example) is
$\mathcal A_\epsilon=\langle e_1, e_2, e_3\rangle$ with multiplication table:
\[e_ie_j=\begin{cases}
e_3, & \text{if} \ \ \epsilon\in [0,\frac{1}{2}),\\
x_1e_1+x_2e_2+(1-x_1-x_2)e_3,&  \text{if} \ \ \epsilon=\frac{1}{2},
\end{cases} \ \ \text{for any} \qquad i,j=1,2,3.
\]
It is easy to see that the limit algebra of Example~\ref{e5} coincides with the limit algebra of Example~\ref{e4}.
Thus we note that though these examples present SFAs of different types their limit algebras coincide.

\emph{Case Example~\ref{e6}.} The limit algebra depends on the given stochastic vector $a(t)$. If $\displaystyle \lim_{t\to\infty} a(t)$ exists then
the limit algebra also exists. But one can choose $a(t)$ such that the limit does not exist. For example take
$a(t)=(\sin^2(t), \cos^2(t),0,\dots,0)$ then the limit does not exist and the set of all possible limit algebras is
a continuum set. We do this point more clear in the case of Example~\ref{e9}.

\emph{Case Example~\ref{e9}.} It is clear that the limit algebra does not exist in Example~\ref{e9} either. Here we shall show that
the set of all limit algebras in discrete time is dense: Consider discrete time $n$, $n\in \mathbb N$, and the FA
$\{\mathcal A^{[n]}, n\in \mathbb N\}$ given by the matrix \eqref{ed3}.
Denote by $\mathcal A_a$, $a\in [-1,1]$,  the algebra which is given by  the matrix of structural constants \eqref{ed3}
with $\sin(\hat t)=a$.

\begin{pro}
 The discrete time FA $\mathcal A^{[n]}$, $n\in \mathbb N$, is dense in
the set $\{\mathcal A_a, \, a\in [-1,1]\}$ of algebras,
i.e. for an arbitrary algebra $\mathcal A_a$ there exists a
sequence $\{n_k\}_{k=1,2,\dots}$ of natural numbers such that
$\displaystyle \lim_{k\to\infty}\mathcal A^{[n_k]}=\mathcal A_a$.
\end{pro}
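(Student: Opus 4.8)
The plan is to reduce the statement to a classical density fact about the orbit of an irrational rotation of the circle. Observe first that, by \eqref{ed3}, every structural constant of $\mathcal A^{[n]}$ is one and the same fixed affine-linear expression in $\cos n$ and $\sin n$ (with coefficients built from the fixed parameters $a,b,c,d$); in particular the algebra $\mathcal A^{[n]}$ depends on $n$ only through the point $\big(\cos n,\sin n\big)$ on the unit circle, i.e.\ only through $n\bmod 2\pi$. By construction $\mathcal A_a$ is the algebra obtained from the same expressions by substituting $\sin\hat t=a$ together with the corresponding value $\cos\hat t=\sqrt{1-a^2}$; that is, $\mathcal A_a=\mathcal A_{\theta_a}$, where $\theta_a=\arcsin a\in[-\tfrac{\pi}{2},\tfrac{\pi}{2}]$ and ``$\mathcal A_\theta$'' denotes the matrix \eqref{ed3} evaluated at $\hat t=\theta$.

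The key step is the following density assertion: since $2\pi$ is irrational, the map $x\mapsto x+1\pmod{2\pi}$ is an irrational rotation of the circle $\mathbb R/2\pi\mathbb Z$, hence the orbit $\{\,n\bmod 2\pi:\ n\in\mathbb N\,\}$ is dense in $[0,2\pi)$ (by Weyl's theorem it is even equidistributed). Consequently $\big\{(\cos n,\sin n):n\in\mathbb N\big\}$ is dense in the unit circle. In particular, given $a\in[-1,1]$ and the angle $\theta_a$ above, we may choose a strictly increasing sequence $\{n_k\}_{k\geq1}$ of natural numbers whose residues modulo $2\pi$ converge to $\theta_a$; then $\cos n_k\to\cos\theta_a=\sqrt{1-a^2}$ and $\sin n_k\to\sin\theta_a=a$.

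Finally, substituting $n=n_k$ into \eqref{ed3} and letting $k\to\infty$, continuity of the affine-linear expressions in $(\cos n_k,\sin n_k)$ shows that every structural constant $c^{[n_k]}_{ijr}$ converges to the corresponding structural constant of $\mathcal A_{\theta_a}=\mathcal A_a$. Since convergence of a flow of algebras in the fixed basis $e_1,\dots,e_m$ means precisely coefficient-wise convergence of the matrices of structural constants, this gives $\lim_{k\to\infty}\mathcal A^{[n_k]}=\mathcal A_a$, as claimed. The only non-routine ingredient is the density of $\{n\bmod 2\pi\}$ in $[0,2\pi)$ (equivalently, the irrationality of $2\pi$); granting that, the remainder is a direct continuity argument, so this is where the small amount of genuine content lies.
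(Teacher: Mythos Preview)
Your proof is correct and follows essentially the same approach as the paper: both rely on the density of $\{n \bmod 2\pi : n\in\mathbb N\}$ in the circle (equivalently, of $\{\sin n\}$ in $[-1,1]$), and then pass to the limit in the entries of \eqref{ed3}. If anything, your version is slightly more careful than the paper's, since by targeting the angle $\theta_a$ you guarantee simultaneous convergence of both $\sin n_k$ and $\cos n_k$, whereas the paper only explicitly arranges $\sin(n_k)\to a$.
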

\begin{proof}
 It is known that the sequences $\{\sin n\}$ and $\{\cos
n\}, n\in \mathbb N$, are dense in $[-1,1]$.
Hence for any $a\in [-1,1]$ there is a sequence
$\{n_k\}_{k=1,2,\dots}$ of natural numbers such that
$\displaystyle  \lim_{k\to\infty}\sin(n_k)=a$. The same sequence can be used to
get $\displaystyle  \lim_{k\to\infty}\mathcal A^{[n_k]}=\mathcal A_a$.
\end{proof}

Since all other above-constructed FAs depend on arbitrary functions ($\psi, \kappa, \gamma, \dots$) their limit algebras
depend on these functions. One can control their limit by choosing these parameter functions.

\subsection{Dynamics of commutativity.} It is clear that a cubic matrix $(c_{ijk})$ generates a commutative algebra iff $c_{ijk}=c_{jik}$.
The algebras in FAs given by Examples~\ref{e2}--\ref{e6} are commutative for any time $(s,t)\in \mathcal T$.

In the case of Example~\ref{e7} an algebra $\mathcal A^{[s,t]}$ of the FA of type (C) is commutative iff $a_{2r}^{[s,t]}=\bar a_{1r}^{[s,t]}$.
For example, take for the matrix $(a_{ij}^{[s,t]})$ the matrix \eqref{defo} and for $(\bar a_{ij}^{[s,t]})$ its transpose.
Then as a particular case of Example~\ref{e7} the following cubic matrix generates an FA of type (C):
\[
\M^{[s,t]}=\begin{pmatrix}
\cos(t-s)&\sin(t-s)&\vline&-\sin(t-s)&\cos(t-s)\\[2mm]
\cos(t-s)&-\sin(t-s)&\vline&\sin(t-s)&\cos(t-s)
\end{pmatrix}.
\]
The algebra $\mathcal A^{[s,t]}$ of this FA is commutative
iff $\cos(t-s)=-\sin(t-s)$, i.e. $t=s+\frac{3\pi}{4}+\pi n$, $n=0,1,2,\dots$.
Since the FA is a time-homogenous, we can consider its dynamics with respect to
one time parameter $\hat t=t-s$. What was shown above is that the commutativity
of the algebras in this FA holds only at values $\frac{3\pi}{4}+\pi n$, $n=0,1,2,\dots$ of the time $\hat t$.
Thus dynamics of the commutativity is as follows: starting from $\hat t=0$ one first meets commutativity at $\hat t=\frac{3\pi}{4}$,
then the commutativity appears only after each adding time $\pi$, i.e with period $\pi$.
Therefore this is an example of almost non-commutative (with respect to Lebesgue measure on $\mathcal T$)
FA of type (C).

In the case of Example~\ref{e8} one can see that the duration of the commutativity is
the set
\[\{(s,t)\in \mathcal T: \kappa_{21}(s)=\frac{1}{2\Psi(s)}-\kappa_{11}(s)\}.\]
Since $\kappa_{11}$, $\kappa_{21}$ and $\Psi$ are arbitrary one can choose these functions
to control this set (the case of Example~\ref{e10} is similar).

The following proposition shows that algebras of  the FA in Example~\ref{e9}
will stay always commutative (or always non-commutative):

\begin{pro} The FA corresponding to Example~\ref{e9} (i.e. to  the matrix \eqref{ed3})
is always commutative iff $a=1-c$ and $b=-d$, otherwise they are always non-commutative.
\end{pro}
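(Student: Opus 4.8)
The plan is to use the elementary criterion recalled at the start of this subsection: a cubic matrix $(c_{ijk})$ defines a commutative algebra precisely when $c_{ijk}=c_{jik}$ for all $i,j,k$. For $m=2$ the only relations that are not automatically satisfied are $c_{12k}^{[\hat t]}=c_{21k}^{[\hat t]}$ for $k=1,2$ (here $\hat t=t-s$), so I would first reduce the whole question to these two scalar identities, reading the eight entries $c_{ijk}^{[\hat t]}$ off the cubic matrix \eqref{ed3} in the layout \eqref{edd}.

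Substituting those entries and abbreviating $u=1-a-c$ and $v=b+d$, the identity $c_{121}^{[\hat t]}=c_{211}^{[\hat t]}$ becomes $u\cos\hat t+v\sin\hat t=0$, while $c_{122}^{[\hat t]}=c_{212}^{[\hat t]}$ becomes $-v\cos\hat t+u\sin\hat t=0$. Thus, for a fixed time $\hat t$, the algebra $\mathcal A^{[\hat t]}$ is commutative if and only if the vector $(\cos\hat t,\sin\hat t)^{T}$ lies in the kernel of
\[
\begin{pmatrix} u & v\\ -v & u\end{pmatrix},
\]
whose determinant equals $u^{2}+v^{2}$.

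I would then argue both implications. If $a=1-c$ and $b=-d$, then $u=v=0$, both scalar identities hold for every $\hat t$, and $\mathcal A^{[\hat t]}$ is commutative at all times — the ``always commutative'' case. Conversely, suppose $(u,v)\neq(0,0)$, equivalently $a\neq 1-c$ or $b\neq -d$. Since $\cos^{2}\hat t+\sin^{2}\hat t=1$, the vector $(\cos\hat t,\sin\hat t)^{T}$ is never the zero vector, so it can lie in the kernel of the above matrix only if that matrix is singular, i.e. $u^{2}+v^{2}=0$; this contradicts $(u,v)\neq(0,0)$. Hence there is no $\hat t$ at which $\mathcal A^{[\hat t]}$ is commutative, so the FA is always non-commutative. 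Combining the two cases yields the stated equivalence.

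There is essentially no obstacle: the reduction to the two scalar equations is mechanical, and the rest rests on the observation that $u^{2}+v^{2}$ is a sum of squares. The one point deserving a little care is the ``otherwise'' clause — one must invoke the fact that $(\cos\hat t,\sin\hat t)$ never vanishes, so that $(u,v)\neq(0,0)$ genuinely rules out commutativity at every single time, not merely for generic $\hat t$.
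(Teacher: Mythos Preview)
Your proof is correct and essentially identical to the paper's. Both reduce commutativity to the same pair of linear equations in $(\cos\hat t,\sin\hat t)$ and $(1-a-c,\,b+d)$; the only cosmetic difference is that the paper treats the rotation matrix $\begin{pmatrix}\cos\hat t&\sin\hat t\\-\sin\hat t&\cos\hat t\end{pmatrix}$ (determinant $1$) as the coefficient matrix acting on $(u,v)^{T}$, whereas you treat $\begin{pmatrix}u&v\\-v&u\end{pmatrix}$ (determinant $u^{2}+v^{2}$) as acting on $(\cos\hat t,\sin\hat t)^{T}$, which is why you needed the extra remark that $(\cos\hat t,\sin\hat t)$ is never zero.
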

\begin{proof} Commutativity of the algebra $\mathcal A^{[\hat t]}$ is reduced to the system
\begin{equation}\label{c9}
 \begin{aligned}
 (1-a-c)\cos\hat t+(b+d)\sin\hat t & =0,  \\
  -(1-a-c)\sin\hat t+(b+d)\cos\hat t & =0.
  \end{aligned}
\end{equation}

Since $\det \begin{pmatrix*}[r]
\cos\hat t&\sin\hat t\\
-\sin\hat t&\cos\hat t
  \end{pmatrix*}=1$ for any $\hat t\geq 0$, the system \eqref{c9} holds only in the case  $a=1-c$ and $b=-d$.
  \end{proof}
The following proposition is a corollary of Theorem~\ref{tA} and Theorem~\ref{tE}:
\begin{pro} The commutativity duration for the FA given in Theorem~\ref{tA} (resp. Theorem~\ref{tE}) is
\[\{(s,t)\in \mathcal T: \ \ \beta^{(s)}_{ijk}=\beta^{(s)}_{jik}, \ \ \text{for any} \ \  i,j,k=1,\dots,m\}.\]
\[\Big(\text{resp.} \  \{(s,t)\in \mathcal T: \ \ \gamma^{(t)}_{ijk}=\gamma^{(t)}_{jik},  \ \ \text{for any} \ \  i,j,k=1,\dots,m\}\Big).\]
\end{pro}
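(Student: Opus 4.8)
The plan is to read off the commutativity condition directly from the explicit form of the cubic matrices produced by Theorem~\ref{tA} and Theorem~\ref{tE}, and then simplify it using the invertibility of the square matrices $A^{[t]}$. Recall that an algebra with structural constants $(c_{ijr})$ is commutative if and only if $c_{ijr}=c_{jir}$ for all $i,j,r$. So for the FA of Theorem~\ref{tA}, where $c_{ijr}^{[s,t]}=\sum_{k=1}^m\beta_{ijk}^{(s)}b_{kr}^{[t]}$, the algebra $\mathcal A^{[s,t]}$ is commutative if and only if
\[\sum_{k=1}^m\big(\beta_{ijk}^{(s)}-\beta_{jik}^{(s)}\big)b_{kr}^{[t]}=0 \qquad \text{for all} \ \ i,j,r.\]

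The key step is then to eliminate the dependence on $t$ and on $r$. For a fixed pair $(s,t)$ in $\mathcal T$, the matrix $(A^{[t]})^{-1}=(b_{kr}^{[t]})$ is invertible (this is part of the hypothesis of Theorem~\ref{tA}), so its rows are linearly independent; hence for each fixed $i,j$ the vanishing of $\sum_{k}(\beta_{ijk}^{(s)}-\beta_{jik}^{(s)})b_{kr}^{[t]}$ for all $r$ forces $\beta_{ijk}^{(s)}-\beta_{jik}^{(s)}=0$ for all $k$. Conversely, if $\beta_{ijk}^{(s)}=\beta_{jik}^{(s)}$ for all $i,j,k$, then each $c_{ijr}^{[s,t]}=c_{jir}^{[s,t]}$ trivially. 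This shows that membership of $(s,t)$ in the commutativity duration $\mathcal T_{\mathrm{comm}}$ depends only on $s$ (through the functions $\beta^{(s)}$), and gives
\[\mathcal T_{\mathrm{comm}}=\{(s,t)\in \mathcal T: \ \ \beta_{ijk}^{(s)}=\beta_{jik}^{(s)}, \ \ \text{for any} \ \ i,j,k=1,\dots,m\},\]
which is the asserted description.

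For the FA of Theorem~\ref{tE}, the argument is the mirror image, interchanging the roles of $s$ and $t$: there $c_{ijr}^{[s,t]}=\sum_{k=1}^m a_{ik}^{[s]}\gamma_{kjr}^{(t)}$, so commutativity $c_{ijr}^{[s,t]}=c_{jir}^{[s,t]}$ reads $\sum_{k=1}^m a_{ik}^{[s]}(\gamma_{kjr}^{(t)}-\gamma_{kir}^{(t)})=0$ — wait, one must be careful about which index pair the symmetrization touches; the correct reading is that commutativity amounts to $c_{ijr}^{[s,t]}=c_{jir}^{[s,t]}$, i.e. $\sum_k a_{ik}^{[s]}\gamma_{kjr}^{(t)}=\sum_k a_{jk}^{[s]}\gamma_{kir}^{(t)}$, and after invoking invertibility of $A^{[s]}=(a_{ik}^{[s]})$ (its columns are linearly independent) one extracts $\gamma_{ijk}^{(t)}=\gamma_{jik}^{(t)}$ for all $i,j,k$. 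This yields the parenthetical claim
\[\mathcal T_{\mathrm{comm}}=\{(s,t)\in \mathcal T: \ \ \gamma_{ijk}^{(t)}=\gamma_{jik}^{(t)}, \ \ \text{for any} \ \ i,j,k=1,\dots,m\}.\]

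I expect the only real subtlety — the ``main obstacle,'' though it is minor — to be getting the bookkeeping of indices right when transposing the argument from case (D) to case (E): in Theorem~\ref{tA} the free parameters $\beta$ carry the argument $s$ and the inverse matrix carries $t$, while in Theorem~\ref{tE} the roles are swapped, and one must make sure the invertibility is applied to the matrix acting on the index being eliminated (the $b^{[t]}$ factor in one case, the $a^{[s]}$ factor in the other). Once the linear-algebra observation ``an invertible matrix has trivial left/right kernel'' is applied on the correct side, both identities fall out immediately, so no lengthy computation is needed.
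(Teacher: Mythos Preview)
For the Theorem~\ref{tA} case your argument is essentially identical to the paper's: write $c_{ijr}^{[s,t]}-c_{jir}^{[s,t]}=\sum_k(\beta_{ijk}^{(s)}-\beta_{jik}^{(s)})b_{kr}^{[t]}$, then use $\det\big((b_{kr}^{[t]})\big)\ne 0$ to conclude $\beta_{ijk}^{(s)}=\beta_{jik}^{(s)}$. Nothing to add there.

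For the Theorem~\ref{tE} case, however, your argument has a genuine gap (the paper itself offers no separate argument, only the parenthetical assertion). The commutativity condition you correctly write down,
\[\sum_k a_{ik}^{[s]}\gamma_{kjr}^{(t)}=\sum_k a_{jk}^{[s]}\gamma_{kir}^{(t)}\qquad\text{for all }i,j,r,\]
says, for each fixed $r$, that the square matrix $A^{[s]}G_r^{(t)}$ is symmetric, where $(G_r^{(t)})_{kj}:=\gamma_{kjr}^{(t)}$. But invertibility of $A^{[s]}$ does \emph{not} let you ``extract'' symmetry of $G_r^{(t)}$ from symmetry of $A^{[s]}G_r^{(t)}$: with $m=2$,
\[A=\begin{pmatrix}1&1\\0&1\end{pmatrix},\qquad G=\begin{pmatrix}0&0\\1&1\end{pmatrix},\qquad AG=\begin{pmatrix}1&1\\1&1\end{pmatrix},\]
the product $AG$ is symmetric while $G$ is not. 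The point is that, unlike case (D), here the two indices $i,j$ being swapped sit in \emph{different} factors of the product (one in $A^{[s]}$, one in $\gamma^{(t)}$), so the system is not of the form ``(invertible matrix)$\times$(difference vector)$=0$'' and the trivial-kernel argument does not apply. Your own caveat (``one must be careful about which index pair the symmetrization touches'') is exactly the issue, and it is not resolved by the sentence that follows it.
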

\begin{proof} From the cubic matrix $\M^{[s,t]}$ mentioned in Theorem~\ref{tA} we get that the commutativity
of the corresponding algebra $\mathcal A^{[s,t]}$ holds for $(s,t)$ such that
\[\sum_{k=1}^m\left[\beta_{ijk}^{(s)}-\beta_{jik}^{(s)}\right]b^{(t)}_{kr}=0, \ \ \text{for all} \ i,j.\]
Since for any $t$, by the  condition of Theorem~\ref{tA}, we have $\det((A^{[t]})^{-1})=\det((b^{(t)}_{kr})_{k,r=1}^m)\ne 0$, and from the last system of equations we get
$\beta_{ijk}^{(s)}=\beta_{jik}^{(s)}$ for any $i,j$ and $k$.
\end{proof}

\subsection{Duration to be an evolution algebra}

 Let $(E,\cdot)$ be an algebra over a field $K$. If it admits a
basis $e_1,e_2,\dots$, such that $e_i\cdot e_j=0$, if $i\ne j$ and $
e_i\cdot e_i=\sum_{k}a_{ik}e_k$, for any $i$, then this algebra is
called an evolution algebra (EA), see \cite{t} for the theory of such algebras.

The following proposition is for stochastic FAs:
\begin{pro} Any SFA $\{\mathcal A^{[s,t]}: s,t\in \mathbb R\}$ does not contain an evolution algebra, i.e.
$\mathcal A^{[s,t]}\ne$ EA for any $(s,t)\in \mathcal T$.
\end{pro}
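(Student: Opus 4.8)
The plan is to argue by contradiction, using part~(i) of the preceding theorem: every $\mathcal A^{[s,t]}$ in a SFA is baric, and since $\sum_k c^{[s,t]}_{ijk}=1$ its canonical character is $\chi(e_i)=1$ for all $i$ (Proposition~\ref{pb}). Observe first that in the standard basis the algebra is visibly not in evolution-algebra form, since for $i\neq j$ the products $e_ie_j=\sum_k c^{[s,t]}_{ijk}e_k$ are nonzero (their coefficients sum to $1$, hence cannot all vanish); so the content of the statement is that \emph{no} basis puts $\mathcal A^{[s,t]}$ into evolution-algebra form. Fix $(s,t)$ and abbreviate $\mathcal A=\mathcal A^{[s,t]}$, $c_{ijk}=c^{[s,t]}_{ijk}$.

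Suppose $\mathcal A$ admits a natural basis $f_1,\dots,f_m$, so $f_if_j=0$ for $i\neq j$ and $f_i^2=\sum_k a_{ik}f_k$. Since $\chi$ is a homomorphism, $\chi(f_i)\chi(f_j)=\chi(f_if_j)=0$ for $i\neq j$; as $\chi\neq 0$ and the $f_i$ form a basis, exactly one of them, say $f_1$, has $\chi(f_1)\neq 0$, and after rescaling $\chi(f_1)=1$. Put $H=\ker\chi=\operatorname{span}(f_2,\dots,f_m)$, of dimension $m-1$. From $\chi(e_k)=1$ the $f_1$-coordinate of each $e_k$ is $1$, i.e. $e_k=f_1+u_k$ with $u_k\in H$; from $\chi(f_1^2)=\chi(f_1)^2=1$ we get $f_1^2=f_1+w$ with $w\in H$; and $f_1f_i=0$ ($i\geq 2$) gives $f_1u_k=0$. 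Hence for all $i,j$
\[
e_ie_j=(f_1+u_i)(f_1+u_j)=f_1^2+u_iu_j=f_1+w+u_iu_j ,
\]
while also $e_ie_j=\sum_k c_{ijk}e_k=f_1+\sum_k c_{ijk}u_k$ since $\sum_k c_{ijk}=1$. Comparing, $\sum_k c_{ijk}u_k-u_iu_j=w$ is the same element of $H$ for every pair $(i,j)$.

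The last step, which I expect to be the main obstacle, is to turn this rigidity into a contradiction when $m\geq 2$. For that one should exploit that $u_2-u_1,\dots,u_m-u_1$ (which are $e_2-e_1,\dots,e_m-e_1$) form a basis of $H$, that squaring on $H$ is ``diagonal'' in the $f$-basis --- $\big(\sum_{a\geq 2}\lambda_af_a\big)^2=\sum_{a\geq 2}\lambda_a^2 f_a^2$ --- and the surviving constraints on $(c_{ijk})$: positivity $c_{ijk}\geq 0$, symmetry $c_{ijk}=c_{jik}$, and $\sum_k c_{ijk}=1$. These should force the identities $\sum_k c_{ijk}u_k-u_iu_j=w$ to be inconsistent. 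This is the delicate part and the only place where the genuinely stochastic structure of a SFA (rather than that of a mere FA) is consumed; I would pay particular attention to degenerate multiplication tables, such as those with $e_ie_j$ independent of $i,j$, which are the hardest to distinguish from an evolution-algebra form, and would dispose of the trivial case $m=1$ (where $e_1^2=e_1$ is itself an EA) separately.
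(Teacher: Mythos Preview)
Your attempt aims at a stronger statement than the paper does, and that stronger statement is in fact false, which is why you cannot close the argument.

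The paper's proof is a one-liner in the fixed basis $e_1,\dots,e_m$: since $\sum_k P^{[s,t]}_{ij,k}=1$, some $P^{[s,t]}_{ij,k_0}>0$, hence $e_ie_j\neq 0$ for every $i\neq j$, so the given multiplication table is not in evolution-algebra form. Throughout the paper an algebra is identified with its cubic matrix of structure constants in the basis $e_1,\dots,e_m$ (cf.\ the subsequent computation of the ``duration to be an EA'' for Example~\ref{e8}, which is purely a condition on the $c^{[s,t]}_{ijk}$), so ``$\mathcal A^{[s,t]}\neq$ EA'' here means: the displayed table is not an EA table. Your own first paragraph already contains exactly this argument.

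What you then try to prove --- that no change of basis can put $\mathcal A^{[s,t]}$ into EA form --- is not what the paper asserts, and it fails already for the SFA of Example~\ref{e6} with $m=2$ and $a(t)\equiv(\tfrac12,\tfrac12)$, i.e.\ $c^{[s,t]}_{ijk}=\tfrac12$ for all $i,j,k$. In the basis $g=e_1+e_2$, $h=e_1-e_2$ one computes $g^2=2g$, $gh=hg=0$, $h^2=0$, so this algebra \emph{is} an evolution algebra in the basis-independent sense. (Your own worry about $m=1$ was a first symptom of this.) Thus the ``delicate part'' you left open is not merely delicate: the rigidity you hoped to extract from $\sum_k c_{ijk}u_k-u_iu_j=w$ together with positivity and symmetry does not exist, and the proposed route cannot be completed.
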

\begin{proof} Since in a stochastic flow of algebras we have the condition that
\[\sum_{k=1}^m P_{ij,k}^{[s,t]}=1, \ \text{for all} \  i,j \ \ \text{and} \ \ \text{for all} \ (s,t)\in \mathcal T,\]
for each $(s,t)$ and $i,j$, there exists $k_0=k_0(i,j,s,t)$ such that $P_{ij,k_0}^{[s,t]}>0$.
Consequently, $e_ie_j\ne 0$ for each $i\ne j$, i.e. the algebra is not an EA.
\end{proof}
We note that a non-stochastic FA may contain EAs:
in the case of Example~\ref{e8} one can see that the duration to be an EA in the corresponding FA is
the set
\[\mathcal E=\{(s,t)\in \mathcal T: \kappa_{21}(s)=\frac{1}{2\Psi(s)}-\kappa_{11}(s)=0\}.\]
Since $\kappa_{11}$, $\kappa_{21}$ and $\Psi$ are arbitrary for each given subset $\widetilde{\mathcal T}\subset \mathcal T$
one can choose these functions in such a way that $\mathcal E=\widetilde{\mathcal T}$ (the case of Example~\ref{e10} is similar).

\subsection{Dynamics of associativity} It is known \cite{Ja} that a finite-dimensional algebra $A$
with a matrix of structural constants $(c_{ijk})_{i,j,k=1}^n$ is associative iff
\begin{equation}\label{ass}
\sum_{r=1}^nc_{ijr}c_{rkl}=\sum_{r=1}^nc_{irl}c_{jkr}, \qquad \text{for all} \ i,j,k,l.
\end{equation}
In this subsection we check the associativity condition for FAs of Theorem~\ref{tA} and \ref{tE}.
\begin{thm}
 The duration of associativity for the FA $\{\mathcal A^{[s,t]}\}$ given in Theorem~\ref{tA} (resp. Theorem~\ref{tE}) is
\[\Big\{(s,t)\in \mathcal T: \sum_{p,r}\left(\beta_{ijp}^{(s)}\beta_{rkq}^{(s)}-\beta_{irq}^{(s)}\beta_{jkp}^{(s)}\right)b^{[t]}_{pr}=0, \ \  \text{for all} \ i,j, k,q\Big\}.\]
\[\Big( \text{resp.} \ \ \Big\{(s,t)\in \mathcal T: \sum_{r,q}\Big(a_{rq}^{[s]}\gamma_{pjr}^{(t)}\gamma_{qkl}^{(t)}-a_{jq}^{[s]}\gamma_{prl}^{(t)}\gamma_{qkr}^{(t)}\Big)=0, \ \ \text{for all} \ p,j,k,l\Big\}\Big).\]
\end{thm}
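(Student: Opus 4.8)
The plan is to substitute the explicit structural constants of Theorems~\ref{tA} and~\ref{tE} into the associativity criterion~\eqref{ass} and then cancel an invertible matrix factor, turning~\eqref{ass} into an equivalent smaller system that is precisely the claimed duration.

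For the FA of Theorem~\ref{tA}, recall $c_{ijr}^{[s,t]}=\sum_{p}\beta_{ijp}^{(s)}b_{pr}^{[t]}$. First I would plug this in for each of the four structural constants appearing in~\eqref{ass} and expand $\sum_r c_{ijr}^{[s,t]}c_{rkl}^{[s,t]}$ and $\sum_r c_{irl}^{[s,t]}c_{jkr}^{[s,t]}$ as four-fold sums in the $\beta^{(s)}$'s and $b^{[t]}$'s. After relabelling the dummy summation indices so that both sides carry the same pair of factors $b_{pr}^{[t]}b_{ql}^{[t]}$, the difference of the two sides of~\eqref{ass} becomes $\sum_{q}b_{ql}^{[t]}D_q$, where $D_q=\sum_{p,r}\bigl(\beta_{ijp}^{(s)}\beta_{rkq}^{(s)}-\beta_{irq}^{(s)}\beta_{jkp}^{(s)}\bigr)b_{pr}^{[t]}$. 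Hence~\eqref{ass} is equivalent to $\sum_q b_{ql}^{[t]}D_q=0$ for all $i,j,k,l$.

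The key step is then to observe that, for fixed $i,j,k$, the relation $\sum_q b_{ql}^{[t]}D_q=0$ for all $l$ says that the row vector $(D_q)_q$, multiplied by the invertible matrix $(A^{[t]})^{-1}=(b_{ql}^{[t]})$, is zero; therefore $D_q=0$ for every $q$, and conversely $D_q\equiv 0$ trivially yields~\eqref{ass}. This chain of equivalences is exactly the statement that the associativity duration equals the displayed set. For Theorem~\ref{tE} I would run the mirror argument: with $c_{ijr}^{[s,t]}=\sum_p a_{ip}^{[s]}\gamma_{pjr}^{(t)}$, substitution into~\eqref{ass} collapses the difference of the two sides (after the analogous relabelling) to $\sum_p a_{ip}^{[s]}E_p$ with $E_p=\sum_{q,r}\bigl(a_{rq}^{[s]}\gamma_{pjr}^{(t)}\gamma_{qkl}^{(t)}-a_{jq}^{[s]}\gamma_{prl}^{(t)}\gamma_{qkr}^{(t)}\bigr)$, and invertibility of $A^{[s]}=(a_{ip}^{[s]})$ forces $E_p=0$ for all $p,j,k,l$, which is the second claimed set.

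I expect the only real difficulty to be organisational rather than conceptual: one has to pick the relabellings of the dummy indices in the four-fold sums so that, in each case, exactly one copy of the relevant invertible matrix ($(A^{[t]})^{-1}$ for Theorem~\ref{tA}, $A^{[s]}$ for Theorem~\ref{tE}) is left standing outside the bracket while all the remaining summations are trapped inside $D_q$ (resp. $E_p$); only with that bookkeeping in place does the ``divide by an invertible matrix'' step apply directly. Beyond~\eqref{ass}, the explicit form of $\M^{[s,t]}$ from Theorems~\ref{tA} and~\ref{tE}, and the invertibility hypotheses, no further input is required.
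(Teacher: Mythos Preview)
Your proposal is correct and is essentially identical to the paper's own proof: substitute the explicit structural constants, collect the difference of the two sides of~\eqref{ass} as $\sum_q b^{[t]}_{ql}X_{ijk,q}$ (your $D_q$ is the paper's $X_{ijk,q}$), and invoke invertibility of $(A^{[t]})^{-1}$ to conclude $X_{ijk,q}=0$. The paper only writes out the Theorem~\ref{tA} case explicitly and leaves the Theorem~\ref{tE} case implicit; your mirror computation with $E_p$ and the invertibility of $A^{[s]}$ is exactly the intended argument.
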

\begin{proof} For $\mathcal A^{[s,t]}$ the condition \eqref{ass} can be written as
\[\sum_{p,r,q}\beta_{ijp}^{(s)}\beta_{rkq}^{(s)}b^{[t]}_{pr}b^{[t]}_{ql}=\sum_{p,r,q}\beta_{irq}^{(s)}\beta_{jkp}^{(s)}b^{[t]}_{pr}b^{[t]}_{ql}, \qquad \text{for all} \  i,j,k,l.\]
Rewrite it in the following form:
\begin{equation}\label{X}
\sum_q X_{ijk,q}b^{[t]}_{ql}=0, \qquad \text{for all} \  i,j,k,l,
\end{equation}
where \[X_{ijk,q}(s,t)=\sum_{p,r}\left(\beta_{ijp}^{(s)}\beta_{rkq}^{(s)}-\beta_{irq}^{(s)}\beta_{jkp}^{(s)}\right)b^{[t]}_{pr}.\]
Since $\det((A^{[t]})^{-1})\ne 0$ from \eqref{X} we get $X_{ijk,q}(s,t)\equiv 0$, i.e.
\[\sum_{p,r}\left(\beta_{ijp}^{(s)}\beta_{rkq}^{(s)}-\beta_{irq}^{(s)}\beta_{jkp}^{(s)}\right)b^{[t]}_{pr}=0.\]
\end{proof}

\begin{rk} An interesting problem is to construct an FA which contains (depending on time)
as many known finite-dimensional algebras as possible.
\end{rk}
\section*{ Acknowledgements}

 This work was partially supported by Ministerio de Econom\'{\i}a y Competitividad (Spain), grant MTM2013-43687-P
  (European FEDER support included), by Xunta de Galicia, grant GRC2013-045 (European FEDER support
included) and by Kazakhstan Ministry of Education and Science, grant 0828/GF4: ``Algebras, close to Lie:
cohomologies, identities and deformations''. We thank the referee for careful reading of the manuscript and for useful suggestions; 
in particular, for picking an error in the earlier version of Example 8 and Example 10.

{}
\end{document}